\newtheorem{theorem}{Theorem}[section]
\newtheorem{lemma}[theorem]{Lemma}
\newtheorem{proposition}[theorem]{Proposition}
\newtheorem{question}[theorem]{Question}
\newtheorem{corollary}[theorem]{Corollary}
\theoremstyle{remark}
\newtheorem*{remarks}{Remarks}
\newtheorem*{remark}{Remark}
\newcommand{\Vol}{\text{Vol}}
\newcommand{\dVol}{\text{dVol}}
\newcommand{\tr}{\text{tr}}
\newcommand{\Sym}{\text{Sym}}
\newcommand{\Ric}{\text{Ric}}
\newcommand{\injrad}{\text{injrad}}
\newcommand{\diam}{\text{diam}}
\title{Conformal classes realizing the Yamabe invariant}
\author{Heather Macbeth}
\address{Department of Mathematics, Princeton University; Fine Hall, Washington Rd, Princeton, NJ 08544}
\email{macbeth@math.princeton.edu}
\begin{document}

\maketitle

\begin{abstract}
We give a characterization of conformal classes realizing a compact manifold's Yamabe invariant.  This characterization is the analogue of an observation of Nadirashvili for metrics realizing the maximal first eigenvalue, and of Fraser and Schoen for metrics realizing the maximal first Steklov eigenvalue.
\end{abstract}

\section{Introduction}

\subsection{The result}

Three decades ago, Schoen's ground-breaking solution \cite{schoen84} of the Yamabe problem established that, within any conformal equivalence class $c$ of Riemannian metrics on a compact smooth $n$-manifold $M$, the total scalar curvature functional
\[
g\mapsto\frac{\int_M R(g)\dVol_g}{\left(\int_M \dVol_g\right)^{1-2/n}}
\]
attains its infimum.  The quantity
\[
I(c):=\min_{g\in c}\frac{\int_M R(g)\dVol_g}{\left(\int_M \dVol_g\right)^{1-2/n}},
\]
which is therefore well-defined, is known as the the \emph{Yamabe constant} of $c$, and metrics $g$ attaining this minimum are referred to as \emph{Yamabe metrics}.  Computing the Euler-Lagrange equation of the restriction of the total scalar curvature functional to $c$ shows that Yamabe metrics have constant scalar curvature.

One can study the properties of $I(c)$ as it varies over all conformal classes $c$ on a smooth manifold.  In particular the \emph{Yamabe invariant} of $M$ is defined to be the minimax expression
\[
Y(M):=\sup_cI(c)=\sup_c\min_{g\in c}\frac{\int_M R(g)\dVol_g}{\left(\int_M \dVol_g\right)^{1-2/n}};
\]
This quantity is finite, as follows from the observation of Aubin \cite{aub76b} that
$I(c)\leq n(n-1)\omega_n^{2/n}$  for all $c$
(it is a corollary of the solution of the Yamabe problem that equality holds if and only if $c$ is the conformal class of the round sphere). \newline

In this paper we study manifolds $M$ whose Yamabe invariant $Y(M)$ is attained.  Our result is the following algebraic constraint on the set of Yamabe metrics in a conformal class attaining the Yamabe invariant:

\begin{theorem} \label{conv-hull}
Let $M$ be a compact smooth $n$-manifold, $c$ a conformal class attaining  $Y(M)$, and $\mathcal{F}$ the set of unit-volume Yamabe metrics in $c$.  There exists a (positive)  measure $\mu$ on $\mathcal{F}$, such~that
\[
0=\int_{\mathcal{F}}  \left(\Ric(g)-\tfrac{1}{n}R(g)g\right)|\dVol_{g}|^{1-2/n}d\mu(g).
\]
\end{theorem}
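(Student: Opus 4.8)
The plan is to view the Yamabe invariant as the value at $c$ of the functional $I$ on the space of conformal classes of $M$, to exploit that $c$ is a \emph{maximum} of $I$, and to extract the stated identity from a first-variation analysis. The essential computational input concerns the normalized total scalar curvature functional $E(g):=\Vol(g)^{2/n-1}\int_M R(g)\,\dVol_g$. Every metric in $\mathcal{F}$ has constant scalar curvature, hence is a critical point of $E$ restricted to its conformal class; so at such a metric the component of the gradient of $E$ along the pure-trace (conformal) directions vanishes, leaving only the transverse component, which is, up to sign, the trace-free Ricci tensor. Precisely, for $g\in\mathcal{F}$ (taken of unit volume) and a symmetric $2$-tensor $k$ trace-free with respect to $g$,
\[
\frac{d}{dt}\Big|_{t=0}E(g+tk)=-\int_M\big\langle\Ric(g)-\tfrac{1}{n}R(g)g,\,k\big\rangle_g\,\dVol_g .
\]
Taking $g=u^{4/(n-2)}g_0$ for a fixed background metric $g_0\in c$ and $k=u^{4/(n-2)}h$ with $h$ trace-free with respect to $g_0$ — so that $g+tk=u^{4/(n-2)}(g_0+th)$ realizes the infinitesimal deformation $[g_0+th]$ of the conformal class, and $k$ is trace-free with respect to $g$ — one rewrites the right-hand side, in terms of $g_0$, as
\[
-\int_M\big\langle\Ric(g)-\tfrac{1}{n}R(g)g,\,h\big\rangle_{g_0}\,|\dVol_g|^{1-2/n}\,|\dVol_{g_0}|^{2/n}.
\]
Thus the natural object attached to $g\in\mathcal{F}$ is $\Phi(g):=\big(\Ric(g)-\tfrac{1}{n}R(g)g\big)|\dVol_g|^{1-2/n}$ — a smooth trace-free symmetric $2$-tensor carrying the conformally natural weight $|\dVol_g|^{1-2/n}$ — and the derivative above equals, up to a positive factor, minus the pairing of $\Phi(g)$ with the infinitesimal conformal deformation encoded by $h$; this is precisely why the exponent $1-2/n$ appears in the statement. (Note $\mathcal{F}\neq\emptyset$ by Schoen's solution of the Yamabe problem \cite{schoen84}.)

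I would then reduce the theorem to the assertion that $0$ lies in the closed convex hull of $\Phi(\mathcal{F})$ in the Fr\'echet space $W$ of smooth trace-free symmetric $2$-tensors weighted by $|\dVol|^{1-2/n}$, with its $C^\infty$ topology. Granting this, together with the compactness of $\mathcal{F}$ and the continuity of $\Phi$ — so that $\Phi(\mathcal{F})$, and hence its closed convex hull, is compact — the standard integral representation of points of the convex hull of a compact subset of a locally convex space produces a Radon probability measure on $\Phi(\mathcal{F})$ with barycenter $0$; pulling it back along $\Phi$ (a continuous surjection of compact metric spaces, hence admitting a Borel section) gives a positive measure $\mu$ on $\mathcal{F}$ with $\int_{\mathcal{F}}\Phi(g)\,d\mu(g)=0$, as desired. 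To prove the reduced assertion, suppose for contradiction that $0\notin\overline{\mathrm{conv}}\,\Phi(\mathcal{F})$. Then Hahn--Banach separation — followed by a mollification replacing the a priori distributional separating functional by a smooth one, legitimate because $\Phi(\mathcal{F})$ is compact in the $C^\infty$ topology — yields a smooth trace-free symmetric $2$-tensor $h$, i.e.\ an infinitesimal deformation of $c$, and a constant $\delta>0$, with
\[
\int_M\big\langle\Ric(g)-\tfrac{1}{n}R(g)g,\,h\big\rangle_{g_0}\,|\dVol_g|^{1-2/n}\,|\dVol_{g_0}|^{2/n}\;\ge\;\delta\qquad\text{for every }g\in\mathcal{F}.
\]

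I would then derive a contradiction by estimating $I$ along the deformation $c_t:=[g_0-th]$ for small $t>0$. Let $g_t'$ be a unit-volume Yamabe metric of $c_t$, written $g_t'=w_t^{4/(n-2)}(g_0-th)$. In the regime $I(c)<n(n-1)\omega_n^{2/n}$, the compactness theory for the Yamabe problem — applied uniformly over the convergent family of backgrounds $g_0-th$, whose Yamabe constants $I(c_t)$ stay bounded and, for small $t$, below $n(n-1)\omega_n^{2/n}$ — gives, along a subsequence $t\to 0$, convergence $w_t\to\bar w$ in $C^{2,\alpha}$ with $\bar g:=\bar w^{4/(n-2)}g_0\in\mathcal{F}$ and $I(c_t)\to I(c)$. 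Since $w_t^{4/(n-2)}g_0\in c$ forces $E\big(w_t^{4/(n-2)}g_0\big)\ge I(c)$, a Taylor expansion of $E$ (uniform over the precompact family of basepoints) yields
\[
I(c_t)=E\big(w_t^{4/(n-2)}(g_0-th)\big)\;\ge\;I(c)\;-\;t\,\frac{d}{ds}\Big|_{s=0}E\big(w_t^{4/(n-2)}(g_0+sh)\big)\;+\;O(t^2),
\]
and by the first-variation formula the coefficient $-\frac{d}{ds}|_{s=0}E(w_t^{4/(n-2)}(g_0+sh))$ converges, as $t\to 0$, to $\int_M\big\langle\Ric(\bar g)-\tfrac{1}{n}R(\bar g)\bar g,\,h\big\rangle_{g_0}|\dVol_{\bar g}|^{1-2/n}|\dVol_{g_0}|^{2/n}\ge\delta>0$. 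Hence $I(c_t)>I(c)$ for all small $t>0$, contradicting that $c$ realizes $Y(M)$. The only possibility outside the regime $I(c)<n(n-1)\omega_n^{2/n}$ is, by Aubin's bound \cite{aub76b} and the characterization of its equality case, that $c$ is the round conformal class of $S^n$; but then every metric in $\mathcal{F}$ is Einstein, so $\Phi\equiv 0$ and the statement is trivial.

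The step I expect to be the main obstacle is the analytic input just invoked: the compactness of the set $\mathcal{F}$ of unit-volume Yamabe minimizers, and, more delicately, its uniform version as the conformal class varies near $c$. This is exactly where the hypothesis that $c$ realizes $Y(M)$ — equivalently, in the nontrivial case, that $I(c)$ lies strictly below the round sphere's value — is used: below that threshold bubbling is ruled out and the needed compactness follows from the blow-up analysis underpinning the resolution of the Yamabe problem (and when $I(c)\le 0$ the minimizer is in fact unique, so one may instead appeal to the implicit function theorem). Everything else — the first-variation computation with its weight bookkeeping, and the separation-and-barycenter argument — is routine once this compactness is in hand.
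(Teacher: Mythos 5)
Your proposal is correct and follows essentially the same route as the paper: a first-variation formula for the normalized total scalar curvature in trace-free directions, the inequality $I(c_t)\ge I(c)+t\,(\text{first variation})+o(t)$ obtained from the infimum characterization of $I$ together with compactness of unit-volume Yamabe minimizers over nearby conformal classes, Hahn--Banach separation to place $0$ in the closed convex hull of $\Phi(\mathcal{F})$, and the barycenter representation of points of the closed convex hull of a compact set. The only differences are organizational: the paper proves the separation hypothesis directly as an ``Euler--Lagrange inequality'' for every distributional test direction (approximating the direction by smooth sections), whereas you argue the contrapositive, mollifying the separating functional instead; both are sound.
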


\begin{remarks} \begin{enumerate} \item Here $|\dVol_{g}|$ is the density associated to the volume form $\dVol_{g}$; this density is a positive section of the density bundle $|\Lambda^nM|$, an oriented line bundle, and so $|\dVol_{g}|^{1-2/n}$ is a well-defined section of $|\Lambda^nM|^{1-2/n}$.
\item The integral is in the sense of Pettis; see Subsection \ref{functional}.
 \item More explicitly:  let $g$ be a representative of $c$;  and $\mathcal{U}$ the set of positive smooth functions $u$ such that $(u^2g)$ is a unit-volume Yamabe metrics in $c$; then there exists a measure $\mu$ on $\mathcal{U}$, such that
\[
0=\int_{\mathcal{U}}    u^{n-2}\left[\Ric(u^2g)-\tfrac{1}{n}R(u^2g)u^2g\right]d\mu(u).
\]
\end{enumerate}
\end{remarks}

\begin{corollary} \label{conv-hull-finite}
Let $M$ be a compact smooth $n$-manifold, and $c$ a conformal class attaining $Y(M)$, and suppose that there are, up to rescaling, only finitely many Yamabe metrics in $c$.  Then there exist $N\in\mathbb{N}$, and Yamabe metrics  $(g_i)_{1\leq i\leq N}$ (not necessarily unit-volume) in $c$, such that
\begin{equation*}
0=\sum_{i=1}^N  \left(\Ric(g_i)-\tfrac{1}{n}R(g_i)g_i\right)|\dVol_{g_i}|^{1-2/n}.
\end{equation*}
\end{corollary}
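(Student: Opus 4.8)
The plan is simply to unwind Theorem~\ref{conv-hull} in the finite setting: under the stated hypothesis the measure $\mu$ lives on a finite set, so its Pettis integral collapses to a finite linear combination, and the positive coefficients of that combination can then be absorbed into the metrics by homothety.

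First I would record that $\mathcal{F}$ is finite. Every constant rescaling $\lambda^2 g$ (with $\lambda>0$ a constant) of a Yamabe metric $g$ is again a Yamabe metric, since the Yamabe functional is homothety-invariant and $\lambda^2 g$ has constant scalar curvature $\lambda^{-2}R(g)$; and among all such rescalings exactly one, namely $\lambda=(\Vol_g(M))^{-1/n}$, has unit volume. Hence the map sending a Yamabe metric to the unit-volume representative of its homothety class is a bijection from the (by hypothesis finite) set of homothety classes of Yamabe metrics in $c$ onto $\mathcal{F}$. Write $\mathcal{F}=\{h_1,\dots,h_N\}$, equipped with the discrete topology.

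Next I would apply Theorem~\ref{conv-hull} to obtain a positive measure $\mu$ on $\mathcal{F}$; necessarily $\mu=\sum_{i=1}^N a_i\,\delta_{h_i}$ with each $a_i\ge 0$. (If $\mu$ is the zero measure the statement holds vacuously, so assume not.) Discarding the indices $i$ with $a_i=0$ and relabelling, I may take $a_i>0$ for every $i$. Against a finite sum of point masses the Pettis integral is the corresponding finite sum of values of the integrand --- this is immediate from the characterization of the Pettis integral by pairing with continuous linear functionals --- so Theorem~\ref{conv-hull} becomes
\[
0 \;=\; \sum_{i=1}^N a_i\left(\Ric(h_i)-\tfrac{1}{n}R(h_i)h_i\right)|\dVol_{h_i}|^{1-2/n}.
\]

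Finally I would rescale each $h_i$ to absorb its coefficient $a_i$. Assume $n\ge 3$ (for $n=2$ the traceless Ricci tensor vanishes identically and there is nothing to prove), and set $g_i:=a_i^{2/(n-2)}h_i$, which is again a Yamabe metric in $c$. Under a constant homothety $h\mapsto\lambda^2 h$ the tensor $\Ric-\tfrac{1}{n}Rg$ is unchanged, since $\Ric$ is unchanged while $R\mapsto\lambda^{-2}R$ and $g\mapsto\lambda^2 g$; and $|\dVol|^{1-2/n}\mapsto\lambda^{n-2}|\dVol|^{1-2/n}$. With $\lambda=a_i^{1/(n-2)}$ this gives
\[
\left(\Ric(g_i)-\tfrac{1}{n}R(g_i)g_i\right)|\dVol_{g_i}|^{1-2/n} \;=\; a_i\left(\Ric(h_i)-\tfrac{1}{n}R(h_i)h_i\right)|\dVol_{h_i}|^{1-2/n},
\]
and summing over $i$ yields the corollary. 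I do not expect a genuine obstacle here; the only points requiring any care are the finiteness of $\mathcal{F}$, the reduction of the abstract Pettis integral to a concrete finite sum, and the harmless exclusion of the degenerate case $\mu=0$.
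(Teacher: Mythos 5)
Your proposal is correct and follows essentially the same route as the paper's proof: write the finite set $\mathcal{F}$ explicitly, observe that Theorem~\ref{conv-hull} then yields nonnegative coefficients $a_i$ with $0=\sum_i a_i\,\mathcal{Q}(h_i)$, drop the zero coefficients, and absorb the rest by homothety. You merely spell out the details (the collapse of the Pettis integral against point masses and the explicit scaling factor $a_i^{2/(n-2)}$) that the paper leaves implicit.
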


\begin{proof} Write $\mathcal{F}$ explicitly as  $(\overline{g}_{j})$. By Theorem \ref{conv-hull}  there exist nonnegative reals $(a_{j})$ such that $0=\sum_j a_j \ \left(\Ric(\overline{g}_j)-\tfrac{1}{n}R(\overline{g}_j)\overline{g}_j\right)|\dVol_{\overline{g}_j}|^{1-2/n}$.
Drop those  $\overline{g}_j$ for which $a_{j}=0$, and rescale~the~rest.
\end{proof}

The interest of Theorem \ref{conv-hull} and Corollary \ref{conv-hull-finite} lies in their connection to a subtle and very appealing open problem:
\begin{question} \label{is-einstein}
  Do all conformal classes attaining the Yamabe invariant contain an Einstein metric?
\end{question}
  This question is motivated by the examples of manifolds for which the Yamabe invariant has to date been computed, which fall into two classes:
\begin{itemize}
\item Einstein manifolds $(M,g)$ for which it has been proved that $[g]$, the conformal class of the Einstein metric, attains $Y(M)$; these include
    \begin{itemize}
    \item $S^n$ (\cite{aub76a}; see \cite[Section 3]{PL}  for an exposition);
\item manifolds admitting flat metrics (\cite{SY79,GL83}; see \cite[Proposition 1.3]{schoen87}  for an exposition);
    \item general-type and Calabi-Yau complex surfaces \cite{lebrun96,lebrun-survey};
    \item $\mathbb{CP}^2$ \cite{lebrun97};
    \item $\mathbb{RP}^3$ \cite{BN04};
    \end{itemize}
\item manifolds $M$ for which the existence of a conformal class attaining $Y(M)$ is unclear, or for which it has been proved that no conformal class attains $Y(M)$; these include
    \begin{itemize}
    \item $S^1\times S^{n-1}$ and connect sums thereof \cite{kob87};
    \item $M\#k\overline{\mathbb{CP}^2}$, where $M$ is a complex surface of general type \cite{lebrun96};
    \item $M\#[S^1\times S^3]$, where $M$ is a 4-manifold with $Y(M)\leq 0$ \cite{petean};
    \item $\mathbb{RP}^3\#k(S^1\times\mathbb{RP}^2)$ \cite{BN04};
    \end{itemize}
\end{itemize}
as well as by the observation that the Yamabe invariant is a minimax quantity associated with the total scalar curvature functional
\[
g\mapsto\frac{\int_M R(g)\dVol_g}{\left(\int_M \dVol_g\right)^{1-2/n}},
\]
of which Einstein metrics are the critical points.   If Question \ref{is-einstein} were answered in the affirmative, it could perhaps be possible to find Einstein metrics on new manifolds by direct variational methods, by maximizing the functional $I$.

Theorem \ref{conv-hull} generalizes two previous results on Question \ref{is-einstein}.  In the case when the finite set of Theorem \ref{conv-hull} consists simply of a single metric $g$, the following well-known  result is obtained:
\begin{lemma}[see for example  {\cite[discussion preceding Lemma 1.2]{schoen87}}] \label{unique}
Suppose that the conformal class $c$ attains $Y(M)$, and suppose that there is, up to rescaling, exactly one Yamabe metric $g$ in $c$.  Then $g$ is Einstein.
\end{lemma}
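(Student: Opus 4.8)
The plan is to use the fact that, since $c$ attains $Y(M)=\sup_{c'}I(c')$, the conformal class $c$ is a critical point of the functional $I$ on the (infinite-dimensional) space of conformal classes; differentiating $I$ at $c$ in an arbitrary direction, and exploiting uniqueness of the Yamabe metric to make sense of this derivative, will force the trace-free Ricci tensor of $g$ to vanish. Throughout, assume $n\ge 3$ (for $n=2$ the statement is classical), and normalize $g$ to have unit volume.

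Concretely, let $h$ be an arbitrary symmetric $2$-tensor with $\tr_g h=0$ and set $g_t:=g+th$, which is a metric for $|t|$ small. Then $c_t:=[g_t]$ is a curve of conformal classes through $c_0=c$ whose velocity realizes an arbitrary tangent direction to the space of conformal classes, since the directions $\varphi g$ are tangent to $c$ itself. The key step --- and the one I expect to be the main obstacle --- is to show that the unit-volume Yamabe metrics $\hat g_t\in c_t$ can be chosen to form a $C^1$ family with $\hat g_0=g$. This is precisely where the uniqueness hypothesis enters: writing $\hat g_t=u_t^{4/(n-2)}g_t$, the pair $(u_t,I(c_t))$ solves the Yamabe equation $L_{g_t}u_t=I(c_t)\,u_t^{(n+2)/(n-2)}$ (with $L_{g_t}$ the conformal Laplacian) together with the normalization $\int_M u_t^{2n/(n-2)}\,\dVol_{g_t}=1$; uniqueness of the Yamabe metric, combined with standard elliptic estimates and the implicit function theorem, yields the desired differentiable dependence (a degenerate Yamabe metric would require extra care at this point, using the variational minimality of $u_0$). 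In particular $t\mapsto I(c_t)=\mathcal{E}(\hat g_t)$ is differentiable, where $\mathcal{E}(g'):=\int_M R(g')\,\dVol_{g'}$.

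With the family $\hat g_t$ in hand, put $k:=\tfrac{d}{dt}\big|_{t=0}\hat g_t$. Since $\hat g_t$ and $g_t$ are conformal we have $k=h+\varphi g$ for some function $\varphi$, and since $\Vol(\hat g_t)\equiv 1$ we get $\int_M\tr_g k\,\dVol_g=0$, i.e.\ $\int_M\varphi\,\dVol_g=0$. Using the classical first-variation identity $D\mathcal{E}(g)[k]=-\int_M\big\langle\Ric(g)-\tfrac{1}{2}R(g)g,\,k\big\rangle\,\dVol_g$, the trace-freeness of $h$, the identity $\langle\Ric(g)-\tfrac{1}{2}R(g)g,\,g\rangle=(1-\tfrac{n}{2})R(g)$, and the fact that $R(g)$ is constant (as $g$ is a Yamabe metric), the $\varphi$-contribution integrates to zero and one obtains
\[
\frac{d}{dt}\bigg|_{t=0}I(c_t)\;=\;-\int_M\Big\langle\Ric(g)-\tfrac{1}{n}R(g)g,\;h\Big\rangle\,\dVol_g .
\]

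Finally, because $c$ attains $Y(M)$, the function $t\mapsto I(c_t)$ has a maximum at $t=0$, so the displayed derivative vanishes. As $h$ ranged over all trace-free symmetric $2$-tensors and $\Ric(g)-\tfrac{1}{n}R(g)g$ is itself trace-free, choosing $h=\Ric(g)-\tfrac{1}{n}R(g)g$ gives $\int_M\big|\Ric(g)-\tfrac{1}{n}R(g)g\big|^2\,\dVol_g=0$. Hence $\Ric(g)=\tfrac{1}{n}R(g)g$; that is, $g$ is Einstein. The only genuinely delicate point in this argument is the regularity of the family $\hat g_t$ (equivalently, the differentiability of $I$ at $c$) discussed above; everything else is the standard envelope computation for the Einstein--Hilbert functional.
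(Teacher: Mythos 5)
Your route is genuinely different from the paper's: the paper proves this lemma in one line by specializing Theorem \ref{conv-hull} to the case $|\mathcal{F}|=1$ (the probability measure is then a point mass, so the integrand itself must vanish), whereas you give the classical direct variational argument that the lemma's attribution to Schoen points to. Your algebra at the end is correct: with $k=h+\varphi g$, $\int_M\varphi\,\dVol_g=0$, $R(g)$ constant and $h$ trace-free, the first variation of the total scalar curvature does reduce to $-\int_M\langle\Ric(g)-\tfrac1nR(g)g,\,h\rangle\,\dVol_g$, and maximality of $I$ at $c$ together with the substitution $h\mapsto -h$ then kills the trace-free Ricci tensor.

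The gap is exactly where you suspect, and it is not cosmetic. The implicit function theorem applied to the Yamabe equation requires the linearization of $u\mapsto L_{g}u-\lambda u^{(n+2)/(n-2)}$ at $u_0$ (restricted to the normalization constraint) to be invertible, i.e.\ the Yamabe minimizer to be \emph{nondegenerate}; uniqueness of the minimizer does not imply nondegeneracy, so you have not produced a $\mathcal{C}^1$ family $\hat g_t$, and your formula for $\tfrac{d}{dt}I(c_t)$ is unjustified as written. The standard repair --- and in effect what the paper does in Propositions \ref{modulus-lower} and \ref{euler-lagrange} --- is to abandon differentiability of $t\mapsto\hat g_t$ and use only its \emph{continuity} at $t=0$, which does follow from uniqueness together with the compactness result Proposition \ref{compactness} (note that the uniqueness hypothesis rules out the round sphere, so $I(c)<\Lambda$ and that proposition applies). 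One then sandwiches the difference quotient: taking $u_0$ as a competitor for $c_t$ gives $\limsup_{t\to0^+}t^{-1}[I(c_t)-I(c_0)]\le -\int_M\langle\Ric(g)-\tfrac1nR(g)g,\,h\rangle\,\dVol_g$, while taking $u_t$ as a competitor for $c_0$, writing $I(c_t)-I(c_0)\ge Q(u_t^{4/(n-2)}g_t)-Q(u_t^{4/(n-2)}g_0)$ and using $u_t\to u_0$, gives the matching $\liminf$ bound; the one-sided derivative therefore exists and equals your expression, and the argument concludes as you wrote. With that replacement (which is a Dini-derivative/envelope argument rather than an implicit-function-theorem argument) your proof is complete.
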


\begin{proof}[Proof, given Theorem \ref{conv-hull}]
The tensor $\Ric(g)-\tfrac{1}{n}R(g)g$ vanishes precisely when $g$ is Einstein.
\end{proof}

In the case when the class $c$ contains exactly two unit-volume Yamabe metrics (i.e., $|\mathcal{F}|=2$), we may write them explicitly as $g$ and $u^2g$, and obtain the following relationship, previously derived by Anderson \cite[equation 2.37]{and}:
\[
u^{-1}(1+u^{n-2})[\Ric(g)-\tfrac{1}{n}R(g)g]=-(n-2)[\text{Hess}_g(u^{-1})-\tfrac{1}{n}\Delta_g(u^{-1})].
\]
Anderson then gives a Bianchi-identity argument deducing a contradiction from this relationship unless the two metrics coincide, thus answering Question \ref{is-einstein} in the affirmative in this case:
\begin{theorem}[{\cite[Theorem 1.2]{and}}] \label{anderson}
Suppose that the conformal class $c$ attains $Y(M)$, and suppose that there are at most two (modulo rescaling) Yamabe metrics in $c$.  Then in fact there is exactly one Yamabe metric, and that metric is Einstein.
\end{theorem}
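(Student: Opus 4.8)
The plan is to feed the explicit relationship between the two Yamabe metrics, displayed above, into the contracted second Bianchi identity; this is the Bianchi-identity argument alluded to. Abbreviate $E := \Ric(g) - \tfrac1n R(g)\,g$ for the traceless Ricci tensor of $g$, set $v := u^{-1}$, and write $\phi := u^{-1}(1+u^{n-2})$, a strictly positive function; then the relationship above reads
\[
\phi\, E \;=\; -(n-2)\Bigl(\text{Hess}_g v - \tfrac1n (\Delta_g v)\,g\Bigr).
\]
The first thing to record is that, $g$ being a Yamabe metric, $R(g)$ is constant, so the contracted second Bianchi identity gives $\text{div}_g E = 0$.

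Next I would take the $L^2(g)$-inner product of this identity with $E$ and integrate over the closed manifold $M$. On the right-hand side the pure-trace term $\tfrac1n(\Delta_g v)\,g$ pairs to zero against the traceless tensor $E$, leaving $-(n-2)\int_M \langle E, \text{Hess}_g v\rangle_g\,\dVol_g$; one integration by parts rewrites this as $(n-2)\int_M \langle \text{div}_g E, \nabla v\rangle_g\,\dVol_g$, which vanishes by the previous step. The left-hand side is $\int_M \phi\,|E|_g^2\,\dVol_g$. Hence $\int_M \phi\,|E|_g^2\,\dVol_g = 0$, and positivity of $\phi$ forces $E \equiv 0$: the metric $g$ is Einstein. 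Substituting $E \equiv 0$ back into the identity (and using $n \geq 3$) also gives $\text{Hess}_g v = \tfrac1n (\Delta_g v)\,g$.

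To conclude I would analyze this last equation on the Einstein background. Commuting derivatives and using $\Ric(g) = \tfrac1n R(g)\,g$ shows $\Delta_g v = -\tfrac{R(g)}{n-1}\,v + \text{(const)}$; if $v$ is non-constant, a short integration argument forces $R(g) > 0$, and after subtracting the constant from $v$ the equation becomes the Obata equation $\text{Hess}_g v = -\tfrac{R(g)}{n(n-1)}\,v\,g$, so by Obata's theorem $(M,g)$ is isometric to a round sphere. But then the noncompact conformal group of the round sphere produces a positive-dimensional family of unit-volume, pairwise non-homothetic, constant-scalar-curvature metrics in $c$, each of which is a Yamabe minimizer --- contradicting the hypothesis that $c$ contains at most two Yamabe metrics modulo rescaling. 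Hence $v$ must be constant; then $g$ and $u^2 g$ coincide, both being unit-volume, so $c$ contains, up to rescaling, a single Yamabe metric, and Lemma \ref{unique} shows that metric is Einstein.

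I expect the main obstacle to be this closing step, which passes from the statement that $g$ is Einstein to the statement that $g$ is the only Yamabe metric in $c$. It rests on the rigidity of the conformal class of an Einstein metric, i.e.\ Obata's theorem, together with the remark that the round sphere's wealth of conformally round metrics is incompatible with the standing hypothesis that $\mathcal{F}$ has at most two elements. By comparison, the derivation of $E \equiv 0$ is a short, essentially formal integration by parts once the relationship between the two Yamabe metrics is available.
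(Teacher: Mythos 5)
The paper does not actually reproduce a proof of this theorem --- it cites Anderson and describes his argument only as ``a Bianchi-identity argument deducing a contradiction from this relationship unless the two metrics coincide.'' Your proof is a correct realization of exactly that strategy: pair the displayed relation against the traceless Ricci tensor $E$, which is divergence-free because $R(g)$ is constant, to conclude $\int_M\phi|E|^2=0$ and hence $E\equiv 0$; then the residual equation $\text{Hess}_g v=\tfrac1n(\Delta_g v)g$ forces, via the eigenvalue estimate and Obata's theorem, either $v$ constant or the round sphere, the latter being excluded by its infinite family of unit-volume Yamabe metrics.
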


We note a different point of view, from which the available evidence regarding Question \ref{is-einstein} is more equivocal.  There is a bidirectional version of Lemma \ref{unique} which rephrases Question \ref{is-einstein} as a question of uniqueness:
\begin{proposition}[\cite{obata}, see also {\cite[Proposition 1.4]{schoen87}} ]
Let $M$ be a compact manifold other than the sphere, and suppose the conformal class $c$ attains $Y(M)$.  Then there is an Einstein metric in $c$ if and only there is a unique (modulo rescaling) Yamabe metric in $c$;  the Einstein metric is the Yamabe metric if so.
\end{proposition}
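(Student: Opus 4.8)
The plan is to split the biconditional and dispatch the two directions with, respectively, a result already in hand and a classical rigidity theorem. The ``only if'' direction is immediate from Lemma~\ref{unique}: since $c$ attains $Y(M)$, if $c$ contains up to rescaling a unique Yamabe metric $g$, then $g$ is Einstein; and since this $g$ is by hypothesis the (unique) Yamabe metric, the final clause ``the Einstein metric is the Yamabe metric'' is automatic on this side. So all the real work lies in the ``if'' direction, and in fact in showing that a single Einstein metric in $c$ already forces uniqueness of the Yamabe metric.

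For the ``if'' direction, let $g\in c$ be Einstein; we may assume $n\geq 3$. First recall two standard facts: by Schoen's solution of the Yamabe problem $c$ actually contains a Yamabe metric, and every Yamabe metric has constant scalar curvature (the Euler--Lagrange computation quoted in the introduction). Now invoke Obata's rigidity theorem: a compact Einstein manifold $(M,g)$ that is not isometric to a round sphere admits no constant--scalar--curvature metric in its conformal class other than the constant multiples of $g$. Applying this to each Yamabe metric in $c$ (which has constant scalar curvature and is conformal to $g$) shows that every Yamabe metric in $c$ is a constant multiple of $g$; hence there is, up to rescaling, exactly one Yamabe metric in $c$, and it is (a rescaling of) the Einstein metric $g$. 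Note that this direction does not even use the hypothesis that $c$ attains $Y(M)$ --- that hypothesis is needed only to apply Lemma~\ref{unique} in the other direction.

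Thus the proof is essentially an assembly of Lemma~\ref{unique}, the solution of the Yamabe problem, and Obata's theorem; I would cite rather than reprove the last, since it is where all the analytic content sits. The step I expect to be the genuine ``obstacle,'' and the reason for the hypothesis $M\neq S^n$, is precisely Obata's theorem and its sphere exception: its proof divides according to the sign of the Yamabe constant $I(c)=Y(M)$ --- when $I(c)\leq 0$ uniqueness of the constant--scalar--curvature metric up to scale is an elementary maximum--principle argument and the sphere need not be excluded, while the case $I(c)>0$ rests on applying the contracted second Bianchi identity to the trace--free Hessian of the conformal factor, and it is only here that the round sphere genuinely fails. It is worth flagging explicitly why the sphere must be excluded: on $S^n$ the conformal diffeomorphism group produces a continuum of Einstein (hence constant--scalar--curvature, hence Yamabe) metrics in the round conformal class, so there an Einstein metric exists yet the Yamabe metric is very far from unique, and the proposition as stated is false without the hypothesis.
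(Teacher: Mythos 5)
Your proof is correct and is exactly the argument the paper intends: the paper gives no proof of this proposition, only the citations to Obata and to Schoen's Proposition~1.4, and the standard argument behind those citations is precisely your assembly of Lemma~\ref{unique} (for the direction ``unique Yamabe metric $\Rightarrow$ Einstein metric exists'') with Obata's rigidity theorem (for ``Einstein metric exists $\Rightarrow$ unique Yamabe metric''), together with the observation that the sphere must be excluded because of its noncompact conformal group. One cosmetic slip: you have interchanged the labels ``if'' and ``only if''--- in the statement ``an Einstein metric exists if and only if the Yamabe metric is unique,'' the Lemma~\ref{unique} direction is the ``if'' direction and the Obata direction is the ``only if'' direction --- but the mathematics on each side is right.
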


In a general (that is, not necessarily maximizing) conformal class, the question of uniqueness (modulo rescaling) of Yamabe metrics is well-studied. Let $\Omega$ be the subset of conformal classes containing a unique Yamabe metric.  $\Omega$ is large (for instance it contains all conformal classes $c$ such that $I(c)\leq 0$ \cite{aub70}, and it is open and dense \cite{and}) but its complement need not actually be empty; the simplest counterexample is the round sphere, and many others have been found \cite{schoen87,BP}.\newline

We therefore hope that future work may use the results of this paper to provide a resolution of Question \ref{is-einstein}, in one of two ways.  On the one hand, there may exist a Bianchi-type argument generalizing Anderson's, which uses Theorem \ref{conv-hull} to answer the question in the affirmative, at least when $\mathcal{F}$ is finite.  On the other hand, it may in fact be possible to construct a set of metrics satisfying the algebraic criterion of Theorem \ref{conv-hull}, thus answering the question in the negative.

\subsection{The technique}

This paper was inspired by two closely analogous results in other geometric minimax contexts.  Nadirashvili \cite{nadir} studies Riemannian metrics $g$ on a compact smooth manifold $M$ which maximize the weighted first Dirichlet eigenvalue $\lambda_1(g)\Vol(M, g)^{2/n}$, where
\[
\lambda_1(g) = \inf_{\substack{\{u\in\mathcal{C}^\infty(M):\\0= \ \int u\}}}\frac{\int_M |\nabla u|^2\dVol_g}{\int_M u^2\dVol_g}.
\]
 Among other things, he observes (Theorem 5) that such metrics are \emph{$\lambda_1$-minimal}; that is, that there exist a set of first Dirichlet eigenfunctions for $g$, such that the map of $M$ into Euclidean space defined by those eigenfunctions is an isometric embedding as a minimal submanifold of the unit sphere.  The key point of the proof is to find a finite set $(u_i)$ of first eigenfunctions, such that
\begin{equation*}
g=\sum_i  \left[(du_i)^{\otimes 2} + \frac{1}{4}\Delta_g(u_i^2)g\right].
\end{equation*}

Fraser and Schoen \cite{FS} study Riemannian metrics $g$ on a compact smooth manifold with boundary $M$ which maximize the weighted first Steklov eigenvalue $\sigma_1(g) \Vol(\partial M, g|_{\partial M})^{1/(n-1)}$, where
\[
\sigma_1(g)
=\inf_{\substack{\{u\in\mathcal{C}^0(M):\\u\text{ harmonic},\\ \int_{\partial M}u=0\}}}
\frac{\int_ M |\nabla u|^2\dVol_g}{\int_{\partial M} u^2\dVol_{g|_{\partial M}}}.
\]
Among other things, they observe (Proposition 5.2) that such a metric $g$ has a set of first Steklov eigenfunctions, such that the map of $M$ into Euclidean space defined by those eigenfunctions is a conformal embedding as a minimal submanifold of the unit ball, isometric on $\partial M$.  The key point of the proof is to find a finite set $(u_i)$ of first eigenfunctions, such that
\begin{eqnarray*}
0&=&\sum_i \left[(du_i)^{\otimes 2} - \frac{1}{2}|du_i|_g^2g\right],\quad\text{on $M$},\\
1&=&\sum_i  u_i^2,\quad\text{on $\partial M$}.
\end{eqnarray*}

Our argument, particularly in Section \ref{sect-hahn-banach}, closely follows Fraser and Schoen's.

\subsection{Outline}

The organization of this paper is as follows.  Section \ref{sect-compactness} reviews some preliminaries.  Section \ref{sect-euler-lagrange} is dedicated to the proof of Proposition \ref{euler-lagrange}; this proposition establishes control on the formal derivative of the Yamabe functional at a conformal class which attains the Yamabe invariant.  A Hahn-Banach theorem argument in Section \ref{sect-hahn-banach} completes the proof of Theorem \ref{conv-hull}.

\subsection{Acknowledgements}

Discussions with Rod Gover, Richard Schoen and my advisor Gang Tian have helped shape this paper, and I am grateful to them for their interest.  I also especially thank Georgios Moschidis, who drew my attention to an error in the main theorem of this paper as it appeared in an earlier version of the paper (see remark following Proposition \ref{conv-hull-exp}).

\section{Preliminaries} \label{sect-compactness}

\subsection{Bounds on constant-scalar-curvature metrics}

We give a version of the standard bound on metrics with constant scalar curvature, in which the dependence of the bounds on the metric is made explicit.

By convention we define
\[
\Lambda := 4n(n-1)\left(\frac{\Gamma(\tfrac{n}{2})\Gamma(\tfrac{n}{2}+1)\omega_{n-1}}{\Gamma(n+1)}\right)^{2/n}=n(n-1)\omega_n^{\frac{2}{n}}.
\]
Here $\omega_n$ is the volume of the $n$-sphere.

In a fixed conformal class, the standard a priori bounds on metrics with constant scalar curvature are essentially equivalent to the solution to the Yamabe problem for $(M,[g])$ with $I([g])<\Lambda$, and are due to Trudinger \cite{tru68} and Aubin \cite{aub76a,aub76b} (see also the exposition in \cite{PL}):

\begin{theorem} \label{higher-order-single}
Suppose given $m\geq 2$, $p>n$, $\eta$, and a smooth metric $g$ on $M$.  There exists $C>0$ dependent only on $n$, $m$, $p$, $\eta$, $g$, such that for each smooth positive function  $\varphi$ on $M$ such that $\varphi^{4/(n-2)}g$ has volume 1 and constant scalar curvature $\lambda \leq\Lambda-\eta$,
we have the uniform bound
\[
||\varphi||_{\mathcal{W}^{m,p},g}\leq C.
\]
\end{theorem}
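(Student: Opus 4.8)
The statement is the classical a priori bound of Trudinger and Aubin for the Yamabe equation, and I would prove it by the classical argument, merely bookkeeping how the constant depends on the data. Write $2^{*}:=\tfrac{2n}{n-2}$ for the critical Sobolev exponent and $a_n:=\tfrac{4(n-1)}{n-2}$, so that $\varphi^{4/(n-2)}g$ has constant scalar curvature $\lambda$ precisely when $\varphi$ solves the Yamabe equation $-a_n\Delta_g\varphi+R(g)\varphi=\lambda\varphi^{2^{*}-1}$, while the unit-volume condition reads $\|\varphi\|_{L^{2^{*}},g}=1$. Thus the task is to bound, in $\mathcal{W}^{m,p}$, a smooth positive solution of this equation normalized in $L^{2^{*}}$, uniformly over all $\lambda\leq\Lambda-\eta$. (Since $\varphi$ is smooth by hypothesis there is no integrability issue in any integration by parts below; the content is uniformity. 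Also, pairing the equation with $\varphi$ gives $\lambda=a_n\|\nabla\varphi\|_{L^2}^2+\int_M R(g)\varphi^2\,\dVol_g\geq-\|R(g)\|_{L^\infty}\Vol(M,g)^{2/n}$, so $\lambda$ is automatically bounded below in terms of $g$; and when $\lambda<0$ the estimates below only simplify, so I assume $0\leq\lambda\leq\Lambda-\eta$.)

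That same pairing gives $a_n\|\nabla\varphi\|_{L^2}^2=\lambda-\int_M R(g)\varphi^2\,\dVol_g\leq(\Lambda-\eta)+\|R(g)\|_{L^\infty}\Vol(M,g)^{2/n}$ (using $\|\varphi\|_{L^{2^{*}}}=1$ and H\"older), so $\|\varphi\|_{\mathcal{W}^{1,2},g}$ is bounded in terms of $n,\eta,g$. The key step is to upgrade this to a bound on $\|\varphi\|_{L^q,g}$ for some fixed $q>2^{*}$. For $\beta>0$, pairing the equation with $\varphi^{1+2\beta}$, setting $w:=\varphi^{1+\beta}$, and using H\"older with $\|\varphi\|_{L^{2^{*}}}=1$ to bound $\int_M\varphi^{2^{*}-2}w^2\leq\|w\|_{L^{2^{*}}}^2$, one obtains
\[
\frac{a_n(1+2\beta)}{(1+\beta)^2}\,\|\nabla w\|_{L^2}^2\;\leq\;\lambda\,\|w\|_{L^{2^{*}}}^2+\|R(g)\|_{L^\infty}\,\|w\|_{L^2}^2.
\]
Now invoke Aubin's sharp Sobolev inequality on $(M,g)$: for every $\epsilon>0$ there is $C_\epsilon=C_\epsilon(n,\epsilon,g)$ with $\Lambda\|w\|_{L^{2^{*}}}^2\leq(a_n+\epsilon)\|\nabla w\|_{L^2}^2+C_\epsilon\|w\|_{L^2}^2$ --- here one uses $\Lambda=a_n/K_n$ with $K_n$ the best constant in the Euclidean Sobolev inequality, valid because $\Lambda$ is also the Yamabe constant of the round sphere, conformal to flat $\mathbb{R}^n$. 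Combining the last two displays, the coefficient of $\|w\|_{L^{2^{*}}}^2$ that results is positive provided $\tfrac{(a_n+\epsilon)(1+\beta)^2}{a_n(1+2\beta)}<\tfrac{\Lambda}{\Lambda-\eta}$; since the left side tends to $1<\tfrac{\Lambda}{\Lambda-\eta}$ as $\epsilon,\beta\to0$, this holds for some $\epsilon=\epsilon(n,\eta)>0$ and $\beta=\beta(n,\eta)>0$, which we also take $\leq\tfrac2{n-2}$. Fixing these and bounding $\|w\|_{L^2}^2=\|\varphi\|_{L^{2+2\beta}}^{2+2\beta}$ by $\Vol(M,g)^{1-(2+2\beta)/2^{*}}$ via H\"older and $\|\varphi\|_{L^{2^{*}}}=1$, we obtain $\|\varphi\|_{L^q,g}\leq C(n,\eta,g)$ with $q:=2^{*}(1+\beta)>2^{*}$.

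With a bound on $\|\varphi\|_{L^q,g}$ for a fixed $q>2^{*}$ in hand, the equation is effectively subcritical and the rest is soft. A finite Moser iteration --- equivalently, repeated application of $L^p$ elliptic estimates and the Sobolev embedding, each step strictly raising the exponent --- bounds $\|\varphi\|_{L^\infty}$ in terms of $n,\eta,g$. Then the Harnack inequality for $-\Delta_g\varphi+a_n^{-1}\bigl(R(g)-\lambda\varphi^{4/(n-2)}\bigr)\varphi=0$, whose zeroth-order coefficient is now in $L^\infty$, combined with $\sup_M\varphi\geq\Vol(M,g)^{-1/2^{*}}$ from the volume constraint, gives a positive lower bound $\varphi\geq c_0(n,\eta,g)>0$. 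Finally, since $t\mapsto t^{2^{*}-1}$ is smooth on $(0,\infty)$ and $\varphi$ is now pinched between two positive constants, alternating Schauder (and, where convenient, $L^p$) elliptic estimates applied to $-a_n\Delta_g\varphi=\lambda\varphi^{2^{*}-1}-R(g)\varphi$ bootstrap $\varphi$ to a bound in $\mathcal{W}^{m,p}(M,g)$, the constant at each stage depending only on $n,m,p,\eta,g$.

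The one genuinely delicate point is the gain of integrability in the second paragraph: this is exactly where the hypothesis $\lambda\leq\Lambda-\eta$ is indispensable, and it is precisely the concentration phenomenon at the sphere's energy level that the strict inequality $I([g])<\Lambda$ overcomes in Aubin's solution of the Yamabe problem. Checking that $\epsilon$, $\beta$, and the Moser exponents depend only on the admissible data is routine, but is the whole reason for recording the estimate in this quantitative form; everything downstream of the gain of integrability is standard elliptic bootstrapping and never sees the threshold.
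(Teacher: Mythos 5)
Your argument is correct and is exactly the classical Trudinger--Aubin a priori estimate that the paper invokes by citation (to \cite{tru68}, \cite{aub76a,aub76b}, and the exposition in \cite{PL}) rather than proving: the test-function $\varphi^{1+2\beta}$ trick combined with the almost-sharp Sobolev inequality to break the critical exponent below the threshold $\Lambda$, followed by Moser iteration, Harnack, and elliptic bootstrapping. The bookkeeping of how $\epsilon$ and $\beta$ depend only on $n$ and $\eta$ is the point of the quantitative statement, and you have it right.
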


This theorem holds more generally for a compact set of conformal classes:

\begin{theorem} \label{higher-order}
Suppose given $m\geq 2$, $p>n$, $i$, $D$, $B$, $\eta$.  There exists $C>0$ dependent only on $n$, $m$, $p$, $i$, $D$, $B$, $\eta$, such that for each
\begin{itemize}
\item smooth metric $g$ on $M$ such that $\diam(M,g)\leq D$, $\injrad(M,g)\geq i$, $||\Ric(g)||_{\mathcal{C}^{m-2},g}\leq B$,
\item smooth positive function  $\varphi$ on $M$ such that $\varphi^{4/(n-2)}g$ has volume 1 and constant scalar curvature $\lambda \leq\Lambda-\eta$,
\end{itemize}
we have the uniform bound
\[
||\varphi||_{\mathcal{W}^{m,p},g}\leq C.
\]
\end{theorem}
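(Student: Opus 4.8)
The plan is to prove this by contradiction, reducing to a convergent sequence of background metrics via a compactness theorem and then re-running the standard a priori estimate for the Yamabe equation --- in effect the proof of Theorem~\ref{higher-order-single} --- while checking that every constant depends on the background metric only through the quantities controlled by $n,m,p,i,D,B,\eta$. So suppose no such $C$ exists: there are metrics $g_k$ on $M$ with $\diam(M,g_k)\le D$, $\injrad(M,g_k)\ge i$, $\|\Ric(g_k)\|_{\mathcal{C}^{m-2},g_k}\le B$, and positive smooth $\varphi_k$ with $\varphi_k^{4/(n-2)}g_k$ of volume $1$ and constant scalar curvature $\lambda_k\le\Lambda-\eta$, yet $\|\varphi_k\|_{\mathcal{W}^{m,p},g_k}\to\infty$. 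By the compactness theorem for metrics with Ricci curvature bounded in $\mathcal{C}^{m-2}$, diameter bounded above and injectivity radius bounded below, after passing to a subsequence and precomposing with diffeomorphisms of $M$ --- operations that leave the hypotheses and the quantity $\|\varphi_k\|_{\mathcal{W}^{m,p},g_k}$ unchanged --- the $g_k$ converge in $\mathcal{C}^{m-1,\beta}$ (for a fixed $\beta\in(0,1)$) to a limit metric $g_\infty$ on $M$. In particular $\Vol(M,g_k)$ is bounded, and the operators $\Delta_{g_k}$, the scalar curvatures $R(g_k)$, and the harmonic-coordinate atlas of $M$ are all uniformly controlled. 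Since $\lambda_k$ equals the value of the Yamabe functional at $\varphi_k^{4/(n-2)}g_k$, we have $\lambda_k\ge I([g_k])\ge-nB\,\Vol(M,g_k)^{2/n}$ (using $|R(g_k)|\le nB$ and H\"older), so after a further subsequence $\lambda_k\to\lambda_\infty\in(-\infty,\Lambda-\eta]$.

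The crux is a uniform bound $\|\varphi_k\|_{L^\infty}\le C$, which I would obtain by the blow-up analysis of \cite{tru68,aub76a,aub76b} performed against the now-convergent backgrounds. If $m_k:=\max_M\varphi_k\to\infty$, rescale about a maximum point $x_k$ by $\mu_k:=m_k^{-2/(n-2)}$ in $g_k$-normal coordinates, setting $v_k(y):=\mu_k^{(n-2)/2}\varphi_k(\exp^{g_k}_{x_k}(\mu_k y))$; then $v_k(0)=1$, $0<v_k\le1$, the rescaled metrics converge to the Euclidean metric in $\mathcal{C}^{1,\beta}_{\mathrm{loc}}$, the scaling is chosen so that $\int v_k^{2n/(n-2)}$ and $\int|\nabla v_k|^2$ inherit the bounds $\int_M\varphi_k^{2n/(n-2)}\dVol=1$ and $\tfrac{4(n-1)}{n-2}\int_M|\nabla\varphi_k|^2=\lambda_k-\int_M R(g_k)\varphi_k^2\le C$, and the Yamabe equation rescales to $\tfrac{4(n-1)}{n-2}\Delta v_k=\mu_k^2 R(g_k)(\exp^{g_k}_{x_k}(\mu_k y))v_k-\lambda_k v_k^{(n+2)/(n-2)}$ with uniformly controlled coefficients (the first term on the right tending uniformly to $0$). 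Uniform elliptic estimates give a subsequential limit $v_k\to V$ in $\mathcal{C}^1_{\mathrm{loc}}$, where $V>0$ solves $\tfrac{4(n-1)}{n-2}\Delta V+\lambda_\infty V^{(n+2)/(n-2)}=0$ on $\mathbb{R}^n$ with $\int_{\mathbb{R}^n}V^{2n/(n-2)}\le1$ and $\int_{\mathbb{R}^n}|\nabla V|^2<\infty$. If $\lambda_\infty\le0$ then $V$ is a positive bounded subharmonic function lying in $L^{2n/(n-2)}(\mathbb{R}^n)$, hence $V\equiv0$ --- impossible; so $\lambda_\infty>0$, and the classification of positive entire solutions (Caffarelli--Gidas--Spruck) makes $V$ a standard bubble, for which equality holds in the sharp Sobolev inequality: $\tfrac{4(n-1)}{n-2}\int_{\mathbb{R}^n}|\nabla V|^2=\Lambda\bigl(\int_{\mathbb{R}^n}V^{2n/(n-2)}\bigr)^{(n-2)/n}$. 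Testing the equation against $V$ also gives $\tfrac{4(n-1)}{n-2}\int_{\mathbb{R}^n}|\nabla V|^2=\lambda_\infty\int_{\mathbb{R}^n}V^{2n/(n-2)}$; with $\int_{\mathbb{R}^n}V^{2n/(n-2)}\le1$ these force $\lambda_\infty\ge\Lambda$, contradicting $\lambda_\infty\le\Lambda-\eta$. Hence $\|\varphi_k\|_{L^\infty}\le C$.

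With the upper bound in hand, the unit-volume normalization gives $\max_M\varphi_k\ge\Vol(M,g_k)^{-(n-2)/(2n)}\ge c_0>0$, and a uniform Harnack inequality --- legitimate since the zeroth-order coefficient $\lambda_k\varphi_k^{4/(n-2)}-R(g_k)$ of the Yamabe equation is uniformly bounded and $M$ is covered by a controlled number of Harnack balls --- then gives $\varphi_k\ge c_0'>0$ as well (not strictly needed below, but it confirms the picture). A standard elliptic bootstrap finishes the argument: in $\tfrac{4(n-1)}{n-2}\Delta_{g_k}\varphi_k=R(g_k)\varphi_k-\lambda_k\varphi_k^{(n+2)/(n-2)}$ the right-hand side is bounded in $L^\infty$, so $\mathcal{W}^{2,p}$ interior estimates --- uniform because the $g_k$ converge in $\mathcal{C}^{m-1,\beta}$ and the harmonic-coordinate atlas is uniformly controlled --- bound $\varphi_k$ in $\mathcal{W}^{2,p}$, hence in $\mathcal{C}^{1,1-n/p}$; iterating with Schauder estimates, whose coefficients lie in $\mathcal{C}^{m-2,\beta}$ uniformly, bounds $\varphi_k$ in $\mathcal{C}^{m,\beta}$, hence in $\mathcal{W}^{m,p}$, after finitely many steps. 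This contradicts $\|\varphi_k\|_{\mathcal{W}^{m,p},g_k}\to\infty$.

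I expect the main obstacle to be the uniform $L^\infty$ bound of the second paragraph. This is the step that genuinely uses subcriticality: on the round sphere, where $\lambda=\Lambda$, the conformal group produces Yamabe metrics with $\|\varphi\|_{L^\infty}$ unbounded, so no such estimate can hold without the hypothesis $\lambda\le\Lambda-\eta$. The work is in verifying that Trudinger's and Aubin's blow-up analysis proceeds with constants depending only on the geometric data, which is precisely what the reduction to a $\mathcal{C}^{m-1,\beta}$-convergent sequence $g_k\to g_\infty$ accomplishes: it makes all the rescaled equations converge to one fixed limit problem on $\mathbb{R}^n$. The compactness input, the Harnack inequality, and the bootstrap are then uniform for soft reasons; in this sense the present theorem stands to Theorem~\ref{higher-order-single} as ``compact family of conformal classes'' stands to ``single conformal class''.
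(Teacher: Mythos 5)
Your argument is correct in substance, but it takes a genuinely different route from the one the paper intends. The paper treats Theorem~\ref{higher-order} as folklore and sketches a \emph{direct, quantitative} proof: rerun the Trudinger--Aubin a priori estimate behind Theorem~\ref{higher-order-single} and verify that the only metric-dependent constants --- those in the $L^p$ elliptic estimates $||u||_{\mathcal{W}^{m+2,p},g}\leq C[||\Delta_g u||_{\mathcal{W}^{m,p},g}+||u||_{p,g}]$ and in the Harnack inequality --- are controlled by the injectivity radius, the diameter and the $\mathcal{C}^{m-2}$ Ricci bound, by working first in harmonic coordinates and then globally. You instead argue \emph{qualitatively}: contradiction, Cheeger--Gromov $\mathcal{C}^{m-1,\beta}$ precompactness of the admissible backgrounds (legitimate here, since the hypotheses and the norm $||\varphi_k||_{\mathcal{W}^{m,p},g_k}$ are diffeomorphism-invariant), and then a blow-up analysis at a maximum point, using the Caffarelli--Gidas--Spruck classification and the identity between $\Lambda$ and the sharp Sobolev constant to force $\lambda_\infty\geq\Lambda$ and contradict $\lambda\leq\Lambda-\eta$. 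Both routes are standard and both work. Yours avoids all constant-tracking and isolates exactly where the strict gap $\eta$ enters (your remark about the conformal orbit on the round sphere is precisely the right sanity check), at the price of heavier inputs (Cheeger--Gromov compactness, CGS) and a non-explicit constant; the paper's route is lighter on machinery but asks the reader to trust the uniformity of the cited local estimates.

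One step of your bootstrap overshoots, though harmlessly. Since $\Ric(g_k)$ is only controlled in $\mathcal{C}^{m-2}$, the scalar curvature $R(g_k)$ need not lie in $\mathcal{C}^{m-2,\beta}$, so the right-hand side $R(g_k)\varphi_k-\lambda_k\varphi_k^{(n+2)/(n-2)}$ cannot be placed in $\mathcal{C}^{m-2,\beta}$ and the Schauder chain stalls at $\mathcal{C}^{m-1,\beta}$; you cannot conclude a uniform $\mathcal{C}^{m,\beta}$ bound. But you do not need it: $R(g_k)\in\mathcal{C}^{m-2}\subset\mathcal{W}^{m-2,p}$ locally, so a final application of the $L^p$ estimate with $\|\Delta_{g_k}\varphi_k\|_{\mathcal{W}^{m-2,p}}$ on the right --- the very estimate the paper cites --- closes the bootstrap at $\mathcal{W}^{m,p}$, which is all the theorem asserts. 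With that substitution your proof is complete.
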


Such generalizations from a single conformal class to a compact set of conformal classes have generally been understood to be folklore, see for instance \cite[discussion following Equation 2.1]{and}, \cite[Lemma 10.1]{kms}.  The point to be checked is the dependence of the constant $C$ in some standard elliptic estimates on the background Riemannian metric $g$:  the $L^p$ estimates $||u||_{\mathcal{W}^{m+2,p},g}\leq C[||\Delta_g u||_{\mathcal{W}^{m,p},g}+||u||_{p,g}]$
(where $C$ can be controlled in terms of the injectivity radius and $||\Ric||_{\mathcal{C}^m,g}$) and the Harnack inequality
$\sup_Mu \leq C \inf_M u$
(where $C$ can be controlled in terms of the diameter, the injectivity radius and a lower Ricci bound); in each case one works first locally in harmonic co-ordinates \cite{hh,hebey} and then globally.

\subsection{A compactness theorem for Yamabe metrics}

\begin{lemma}\label{modulus-upper}
$I$ is upper semi-continuous.
\end{lemma}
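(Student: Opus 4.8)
The plan is to combine the variational definition of $I(c)$ with the elementary fact that the Yamabe quotient depends continuously on the metric in the $\mathcal{C}^2$ topology. I would work with the topology on conformal classes in which $c_k\to c$ means that one can choose smooth representatives $g_k\in c_k$, $g\in c$ with $g_k\to g$ in $\mathcal{C}^\infty$ (merely $\mathcal{C}^2$ would do); upper semi-continuity is then the assertion that $\limsup_k I(c_k)\leq I(c)$ for every such sequence. Writing
\[
Q(h):=\frac{\int_M R(h)\,\dVol_h}{\left(\int_M \dVol_h\right)^{1-2/n}},
\]
the idea is to produce, for each $k$, a single test metric in $c_k$ whose $Q$-value is close to $I(c)$, and then invoke $I(c_k)\le Q(h)$ for any $h\in c_k$.

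First I would fix a near-minimizer: by Schoen's solution of the Yamabe problem (or just by definition of the infimum), given $\varepsilon>0$ there is $g_\varepsilon\in c$ with $Q(g_\varepsilon)\leq I(c)+\varepsilon$; write $g_\varepsilon = w\,g$ for a positive smooth function $w$ on $M$. Set $h_k:=w\,g_k\in c_k$. Since $g_k\to g$ in $\mathcal{C}^2$ and $w$ is fixed, $h_k\to g_\varepsilon$ in $\mathcal{C}^2$; because the scalar curvature is a second-order expression in the metric with coefficients depending smoothly on the metric and its first two derivatives, and because $\int_M\dVol_{h_k}$ stays bounded away from zero, the functional $Q$ is continuous along this sequence, so $Q(h_k)\to Q(g_\varepsilon)$. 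Then $I(c_k)=\min_{h\in c_k}Q(h)\leq Q(h_k)$ for every $k$, whence $\limsup_k I(c_k)\leq \lim_k Q(h_k)=Q(g_\varepsilon)\leq I(c)+\varepsilon$; letting $\varepsilon\to 0$ gives the claim.

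The only points that need care — and the only places the write-up must do any work — are bookkeeping: pinning down the topology on the space of conformal classes so the statement has content, and recording the routine continuity of $Q$ under $\mathcal{C}^2$-convergence of metrics. I do not expect any genuine analytic obstacle. In particular, note that no compactness of Yamabe minimizers is used, which is precisely why one obtains only \emph{semi}-continuity: the reverse inequality would require controlling the minimizers of $Q$ in the classes $c_k$, and these can concentrate as one approaches the threshold $\Lambda$, so $I$ need not be lower semi-continuous.
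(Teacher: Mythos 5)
Your argument is correct and is essentially the paper's own proof unwound: the paper notes that $I=\inf_{\Omega}Q_\Omega$ is an infimum of a family of functionals each continuous in the conformal class, hence upper semi-continuous, and your transplantation of the fixed conformal factor $w$ from $c$ to $c_k$ is exactly the standard $\varepsilon$-argument proving that general fact. No gap.
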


\begin{proof}
It is the infimum of a continuous functional.
\end{proof}

In fact $I$ is continuous \cite{BB83}, but we will not need that here.

\begin{proposition} \label{compactness}
Let $m\geq 3$.  Let $(M,c)$ be a conformal manifold with $I(c)<\Lambda$.  Let $(c_k)$ be a sequence of smooth conformal classes on $M$ which $\mathcal{C}^m$-converges to $c$, and let $(g_k)$ be volume-1 Yamabe metrics for the classes $(c_k)$.

Then there exists a subsequence $(g_{k_i})$ which $\mathcal{C}^{m-1}$-converges to a volume-1 Yamabe metric for $c$.
\end{proposition}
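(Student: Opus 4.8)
The plan is to fix a smooth reference metric, reduce the statement to a uniform a priori estimate for a sequence of solutions of Yamabe-type equations, and then pass to the limit. By definition of $\mathcal{C}^m$-convergence of conformal classes I may choose representatives $h_k\in c_k$ converging in $\mathcal{C}^m$ to a representative $g\in c$, and write the given Yamabe metrics as $g_k=\varphi_k^{4/(n-2)}h_k$ for smooth positive functions $\varphi_k$ on $M$. I will produce a uniform bound $\|\varphi_k\|_{\mathcal{W}^{m,p},g}\leq C$ for some fixed $p>n$, extract a subsequence of the $\varphi_k$ converging in $\mathcal{C}^{m-1}$, and verify that the resulting limit metric is a unit-volume Yamabe metric for $c$.

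The first --- and main --- step is to apply Theorem \ref{higher-order} with background metrics $h_k$ and functions $\varphi_k$, which requires checking that its hypotheses hold uniformly in $k$. Since $g_k$ has unit volume and constant scalar curvature, the value of the Yamabe functional at $g_k$ equals $R(g_k)$, and because $g_k$ is a Yamabe metric for $c_k$ this forces $R(g_k)=I(c_k)$; by upper semicontinuity of $I$ (Lemma \ref{modulus-upper}) and the hypothesis $I(c)<\Lambda$ we get $\limsup_k I(c_k)\leq I(c)<\Lambda$, hence $R(g_k)\leq\Lambda-\eta$ for all large $k$, where $\eta:=\tfrac{1}{2}(\Lambda-I(c))>0$. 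On the other hand, since $h_k\to g$ in $\mathcal{C}^m$ and $g$ is a fixed smooth metric, the diameters $\diam(M,h_k)$, a positive lower bound on the injectivity radii $\injrad(M,h_k)$, and the norms $\|\Ric(h_k)\|_{\mathcal{C}^{m-2},h_k}$ are all controlled uniformly in $k$ (this is where $m\geq 3$ enters, so that $m-2\geq 1$). Theorem \ref{higher-order} then yields $\|\varphi_k\|_{\mathcal{W}^{m,p},h_k}\leq C$ uniformly, and since the $\mathcal{C}^m$-convergence $h_k\to g$ makes the $h_k$- and $g$-Sobolev norms uniformly equivalent, also $\|\varphi_k\|_{\mathcal{W}^{m,p},g}\leq C'$.

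It remains to pass to the limit. The compact Sobolev embedding $\mathcal{W}^{m,p}\hookrightarrow\mathcal{C}^{m-1}$ (valid since $p>n$) gives a subsequence $\varphi_{k_i}\to\varphi$ in $\mathcal{C}^{m-1}$, with $\varphi\geq 0$. To exclude degeneracy I would use the Harnack inequality for the conformal Laplacian, as in the discussion following Theorem \ref{higher-order}: it gives $\sup_M\varphi_k\leq C\inf_M\varphi_k$ with $C$ uniform, while $\int_M\varphi_k^{2n/(n-2)}\dVol_{h_k}=1$ together with $\Vol(M,h_k)\to\Vol(M,g)<\infty$ keeps $\sup_M\varphi_k$ bounded away from $0$; hence $\varphi\geq\delta>0$, so $\bar g:=\varphi^{4/(n-2)}g$ is a genuine metric and $g_{k_i}\to\bar g$ in $\mathcal{C}^{m-1}$. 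Now $\Vol$ is continuous under $\mathcal{C}^0$-convergence of metrics, so $\Vol(M,\bar g)=1$; and $R$ is continuous under $\mathcal{C}^{m-1}$-convergence (here $m-1\geq 2$), so $R(\bar g)=\lim_i R(g_{k_i})=\lim_i I(c_{k_i})$, which in particular is constant. Therefore the value of the Yamabe functional at $\bar g$ is $R(\bar g)\leq I(c)$ by Lemma \ref{modulus-upper} once more, while it is $\geq I(c)$ by the very definition of $I(c)$; hence $R(\bar g)=I(c)$ and $\bar g$ is a unit-volume Yamabe metric for $c$.

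I expect the main obstacle to be the uniform verification of the hypotheses of Theorem \ref{higher-order}: chiefly, deriving the scalar-curvature bound $R(g_k)\leq\Lambda-\eta$ --- the single place where $I(c)<\Lambda$ is genuinely used --- and propagating the geometric bounds on the backgrounds $h_k$ through the $\mathcal{C}^m$-convergence. Once these are in hand, the subsequence extraction via Arzel\`a--Ascoli (equivalently, the compact Sobolev embedding) and the identification of the limit are routine.
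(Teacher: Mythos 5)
Your proposal is correct and follows essentially the same route as the paper: uniform geometric bounds on the converging background metrics plus upper semicontinuity of $I$ to verify the hypotheses of Theorem \ref{higher-order}, a uniform $\mathcal{W}^{m,p}$ bound and compact embedding into $\mathcal{C}^{m-1}$ to extract the subsequence, and the two-sided comparison of $R(\bar g)$ with $I(c)$ to identify the limit as a Yamabe metric. Your explicit Harnack argument for the positive lower bound on $\varphi$ is a detail the paper's proof leaves implicit, and is a worthwhile addition.
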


\begin{proof}
Choose $p>n$, and choose representatives $(\overline{g}_k)$, $\overline{g}$ of the classes $(c_k)$, $c$ with $\overline{g}_k\to \overline{g}$ in the $\mathcal{C}^m$ topology.  For sufficiently large $k$,
\begin{enumerate}
\item $I(c_k)\leq \tfrac{1}{2}[I(c)+\Lambda]$ (by Lemma \ref{modulus-upper});
\item there are uniform bound on the expressions
\[
\diam(M,\overline{g}_k)\, \quad \injrad(M,\overline{g}_k), \quad ||\Ric(\overline{g}_k)||_{\mathcal{C}^{m-2},\overline{g}_k};
\]
\item there exists a uniform $C$ such that for $k$ sufficiently large,
\[
C^{-1}||\cdot||_{\mathcal{W}^{m+1,p},\overline{g}_k}\leq ||\cdot||_{\mathcal{W}^{m+1,p},\overline{g}} \leq C||\cdot||_{\mathcal{W}^{m+1,p},\overline{g}_k}.
\]
\end{enumerate}

Therefore, by Theorem \ref{higher-order}, if $\varphi_k$ are smooth positive functions on $M$ such that $\varphi_k^{4/(n-2)}\overline{g}_k$ are volume-1 Yamabe metrics for $c_k$, then   we have the uniform bound $||\varphi_k||_{\mathcal{W}^{m,p},\overline{g}}\leq C$, hence by Morrey's inequality the uniform bound $||\varphi_k||_{\mathcal{C}^{m-1,1-\frac{n}{p}},\overline{g}}\leq C$,
and so there exists a subsequence $(k_i)$ such that $\varphi_{k_i}$ $\mathcal{C}^{m-1}$-converges.

It remains to be checked that the limit, $\varphi$, makes $\varphi^{4/(n-2)}\overline{g}$ a Yamabe metric for $c$.  Indeed,  $\varphi_k^{4/(n-2)}\overline{g}_k$ all have volume 1 and constant scalar curvature $I(c_k)$.  So their $\mathcal{C}^{m-1}$-limit $g:=\varphi^{4/(n-2)}\overline{g}$ has volume 1, and constant scalar curvature $\lim_{k\to\infty}I(c_k)$, which is $\leq I(c)$ by Lemma \ref{modulus-upper}; this is a contradiction unless equality holds and $\overline{g}$ is a Yamabe metric.
\end{proof}

\subsection{Notation}

As sketched in the introduction, we denote by  $|\Lambda^nM|$ the bundle of densities on $M$; it is a line bundle, equipped with a natural positive orientation, and hence all real powers $|\Lambda^nM|^\alpha$ are well-defined.  We write $|\Omega|$ for the density associated to an $n$-form $\Omega$.

For each $\alpha$ there is a well-defined ``determinant" map of bundle sections,
\[
\det:\mathcal{C}^\infty(M, \Sym^2(T^*M)\otimes |\Lambda^nM|^\alpha)
\to \mathcal{C}^\infty(M, |\Lambda^nM|^{2+n\alpha}).
\]
For $\alpha=0$ this is the square of the volume form, $g\mapsto (\dVol_g)^{\otimes 2}$.
The case $\alpha=-2/n$ (that is, the bundle $\Sym^2(T^*M)\otimes |\Lambda^nM|^{-2/n}$) has the special feature that its determinant has range $\mathcal{C}^\infty(M, \mathbb{R})$.

Denote by $\mathcal{M}$ the space of smooth Riemannian metrics on $M$, and by $\mathcal{V}$ the space of smooth positive densities.  The map from $\mathcal{M}$  to $\mathcal{V}\times\mathcal{C}^\infty(M, \Sym^2(T^*M)\otimes |\Lambda^nM|^{-2/n})$ given by
\[
g\mapsto (|\dVol_g|, g\otimes|\dVol_g|^{-2/n})
\]
is injective with image $\mathcal{V}\times\mathcal{C}$, where $\mathcal{C}$ denotes the set of smooth positive-definite sections of $\Sym^2(T^*M)\otimes |\Lambda^nM|^{-2/n}$ with determinant 1.  The inverse, an isomorphism from  $\mathcal{V}\times\mathcal{C}$ to $\mathcal{M}$, is given by
$(\Omega, c)\mapsto \Omega^{2/n}c$.

For any element $c\in\mathcal{C}$, the set of Riemannian metrics in $\mathcal{M}$ corresponding to $\mathcal{V}\times\{c\}$ under this isomorphism is precisely a conformal equivalence class of Riemannian metrics.  We therefore identify $c$ with that conformal equivalence class, and $\mathcal{C}$ with the space of smooth conformal classes.  The tangent space $T_c\mathcal{C}$ to $\mathcal{C}$ at $c$ is the kernel of the trace
\[
\tr_c:\mathcal{C}^\infty(M,\Sym^2(T^*M)\otimes |\Lambda^nM|^{-2/n})\to\mathcal{C}^\infty(M,\mathbb{R}).
\]

\subsection{Continuity properties of the total scalar curvature functional}

 We introduce the notation $Q:\mathcal{M}\to \mathbb{R}$ for the total scalar curvature functional,
 \[
 Q(g)=\frac{\int_M R(g)|\dVol_g|}
 {\left(\int_M \dVol_g\right)^{1-2/n}},
 \]
 and, according to the isomorphism described in the previous subsection, have an equivalent functional $Q:\mathcal{V}\times\mathcal{C}\to \mathbb{R}$,
 \[
 Q_\Omega(c)=Q(\Omega^{2/n}c).
 \]

 We have, for a conformal class $c$,
 \[
 I(c)=\inf_{g\in c}Q(g)=\inf_{\Omega\in\mathcal{V}}Q_\Omega(c).
 \]

 \begin{lemma} \label{derivs}
 Let $\Omega$ be a positive density.  The functional $Q_\Omega:\mathcal{C}\to\mathbb{R}$ is $\mathcal{C}^1$, and its derivative $D_c(Q_\Omega):T_c\mathcal{C}\to\mathbb{R}$ at $c\in\mathcal{C}$ is
 \[
 D_c(Q_\Omega)(w)=\Vol(\Omega)^{\frac{2}{n}-1}\int_M\langle w, - \Ric(\Omega^{2/n}c)\Omega^{1-2/n}\rangle_c.
 \]
 \end{lemma}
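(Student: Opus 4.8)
The plan is to reduce the computation to the classical first-variation formula for scalar curvature. Since the density $\Omega$ is held fixed, the factor $\Vol(\Omega)=\int_M\Omega$ is a constant; moreover, as recorded in the Notation subsection, for every $c\in\mathcal{C}$ the metric $\Omega^{2/n}c$ has volume density $|\dVol_{\Omega^{2/n}c}|=\Omega$. Hence on $\mathcal{C}$
\[
Q_\Omega(c)=\Vol(\Omega)^{\frac2n-1}\int_M R(\Omega^{2/n}c)\,\Omega,
\]
and only the integral on the right depends on $c$. The asserted $\mathcal{C}^1$ regularity follows from this expression: $c\mapsto\Omega^{2/n}c$ is the restriction to $\mathcal{C}$ of a continuous linear map into $\mathcal{M}$, the scalar curvature depends smoothly on the metric (rationally in $g^{-1}$ and in the first two derivatives of $g$), and integration against the fixed density $\Omega$ is continuous and linear; differentiation under the integral sign is justified by the usual uniform bounds on the compact manifold $M$.

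To compute $D_c(Q_\Omega)(w)$ for $w\in T_c\mathcal{C}=\ker\tr_c$, I would choose a $\mathcal{C}^1$ curve $t\mapsto c_t$ in $\mathcal{C}$ with $c_0=c$ and $\dot c_0=w$ --- such a curve exists since $\mathcal{C}$ is a submanifold of the ambient space of sections with tangent space $\ker\tr_c$ at $c$ --- and set $g_t:=\Omega^{2/n}c_t$, a curve of Riemannian metrics with $g_0=\Omega^{2/n}c=:g$ and $\dot g_0=\Omega^{2/n}w=:h$ (note that $h$ is trace-free with respect to $g$, because $w\in\ker\tr_c$). The standard first-variation formula for scalar curvature reads
\[
\tfrac{d}{dt}\big|_0 R(g_t)=-\langle\Ric(g),h\rangle_g+(\text{terms which are divergences of 1-forms}),
\]
so that, integrating against $\Omega=|\dVol_g|$ over the closed manifold $M$, the divergence terms drop out and
\[
D_c(Q_\Omega)(w)=\Vol(\Omega)^{\frac2n-1}\int_M\tfrac{d}{dt}\big|_0 R(g_t)\,\Omega=-\Vol(\Omega)^{\frac2n-1}\int_M\langle\Ric(g),h\rangle_g\,\dVol_g.
\]

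It then remains to rewrite the right-hand side using the inner product and density of the statement. Since $g=\Omega^{2/n}c$ one has $g^{-1}=\Omega^{-2/n}c^{-1}$ and $h=\Omega^{2/n}w$, and a direct computation in local coordinates gives the pointwise identity of densities
\[
\langle\Ric(g),h\rangle_g\,\Omega=\langle w,\ \Ric(\Omega^{2/n}c)\,\Omega^{1-2/n}\rangle_c;
\]
substituted into the previous display, this yields the claimed formula for $D_c(Q_\Omega)$. I expect this last translation to be the only fiddly part of the argument: one has to keep the density weights of $w$, of $\Ric$, and of the various powers of $\Omega$ consistent, and to distinguish carefully the two metric contractions $\langle\cdot,\cdot\rangle_g$ and $\langle\cdot,\cdot\rangle_c$. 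A minor additional point is to check that the curve $c_t$ can be taken inside $\mathcal{C}$ with initial velocity $w$; this is routine, since $\det$ has vanishing differential in trace-free directions, so for instance $c_t:=(\det(c+tw))^{-1/n}(c+tw)$ works for small $t$.
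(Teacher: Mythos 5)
Your proof is correct and takes essentially the same route as the paper: both come down to the classical linearization of scalar curvature plus the bookkeeping that converts $\langle \Omega^{2/n}w,\cdot\rangle_g\,|\dVol_g|$ into $\langle w,\cdot\,\Omega^{1-2/n}\rangle_c$. The only difference is cosmetic: the paper cites the formula for $D_g(Q)$ on all of $\mathcal{M}$ and uses $\tr_c w=0$ to kill the extra trace term, whereas you observe that $|\dVol_{\Omega^{2/n}c}|=\Omega$ is held fixed so that only the $-\langle h,\Ric(g)\rangle_g$ term ever appears after discarding divergences.
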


\begin{proof}
It is well-known (e.g.\ \cite[Proposition 4.17]{Besse}) that for a Riemannian metric $g$ and symmetric 2-tensor $h$,
\[
D_g(Q)(h)=\Vol(g)^{2/n-1}
\int_M \left\langle h, -\Ric(g)+
\tfrac{1}{2}\left[R(g)-\frac{\int_MR(g)\dVol_g}{\Vol(g)}\right]g\right\rangle_g|\dVol_g|,
\]
Recall from the previous subsection that $T_c\mathcal{C}$ is the set of $c$-tracefree sections of $\mathcal{C}^\infty(M,\Sym^2(T^*M))$.  If $w$ is such a section, the tangent vector $(0,w)$ to $\mathcal{V}\times\mathcal{C}$ at $(\Omega,c)$ corresponds to the tangent vector $\Omega^{2/n}w$ to $\mathcal{M}$ at $\Omega^{2/n}c$, and since $\tr_cw=0$, the term $\langle \Omega^{2/n}w,\Omega^{2/n}c\rangle_{\Omega^{2/n}c}$ vanishes.
\end{proof}

\begin{proposition}[Modulus of continuity of $I$] \label{modulus-lower}
Let $(M,c_0)$ be a conformal manifold. Let $v$ be a smooth section of $\Sym^2(T^*M)\otimes |\Lambda^nM|^{-2/n}$, sufficiently small that $\det(c_0+tv)$ is nonvanishing for $t\in[0,1]$.  Write
\[
c_t:=\frac{c_0+tv}{\det(c_0+tv)^{\frac{1}{n}}},
\]
so that $(c_t)$ is a 1-parameter family of conformal classes starting at $c_0$. Let $\Omega$ be a density such that $g_1=\Omega^{2/n}c_1$ is a Yamabe metric for $c_1$, and write $g_t=\Omega^{2/n}c_t$.  Then
\begin{multline*}
\Vol(\Omega)^{1-\frac{2}{n}}\left[I(c_1)-I(c_0)\right]\\
\geq\int_0^1\int_M\left\langle v,
\left(- \Ric(g_t) +\tfrac{1}{n}R(g_t)g_t\right)\det(c_0+tv)^{-1/n}
\Omega^{1-2/n}\right\rangle_{c_t}dt.
\end{multline*}
\end{proposition}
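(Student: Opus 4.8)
The plan is to interpolate between $c_0$ and $c_1$ along the path $(c_t)$ and estimate the change in $I$ by freezing the conformal factor. The starting observation is the one-sided bound
\[
\Vol(\Omega)^{1-\frac{2}{n}} I(c_t) \;\leq\; \Vol(\Omega)^{1-\frac{2}{n}} Q_\Omega(c_t) \;=\; \Vol(\Omega)^{1-\frac{2}{n}} Q(g_t),
\]
valid for every $t$, with equality at $t=1$ since $g_1$ is by hypothesis a Yamabe metric for $c_1$. Hence
\[
\Vol(\Omega)^{1-\frac{2}{n}}\bigl[I(c_1)-I(c_0)\bigr]
\;\geq\; \Vol(\Omega)^{1-\frac{2}{n}}\bigl[Q(g_1)-I(c_0)\bigr]
\;\geq\; \Vol(\Omega)^{1-\frac{2}{n}}\bigl[Q_\Omega(c_1)-Q_\Omega(c_0)\bigr],
\]
where in the last step I used $I(c_0)\leq Q_\Omega(c_0)$. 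So it suffices to bound $Q_\Omega(c_1)-Q_\Omega(c_0)$ from below, and in fact I will compute it exactly as an integral via the fundamental theorem of calculus: $Q_\Omega(c_1)-Q_\Omega(c_0)=\int_0^1 \frac{d}{dt}Q_\Omega(c_t)\,dt$.

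The next step is to differentiate $t\mapsto Q_\Omega(c_t)$ using the chain rule together with Lemma \ref{derivs}. The tangent vector to the path $(c_t)$ at time $t$ is $\dot c_t$, obtained by differentiating $c_t=(c_0+tv)\det(c_0+tv)^{-1/n}$; this is automatically $c_t$-tracefree (it lies in $T_{c_t}\mathcal{C}$), so Lemma \ref{derivs} applies and gives
\[
\frac{d}{dt}Q_\Omega(c_t) = \Vol(\Omega)^{\frac{2}{n}-1}\int_M \langle \dot c_t,\, -\Ric(g_t)\,\Omega^{1-2/n}\rangle_{c_t}.
\]
Now $\dot c_t = \det(c_0+tv)^{-1/n} v + (c_0+tv)\cdot\frac{d}{dt}\bigl[\det(c_0+tv)^{-1/n}\bigr]$; the second term is a scalar multiple of $c_t$, so in the pairing $\langle\,\cdot\,, -\Ric(g_t)\Omega^{1-2/n}\rangle_{c_t}$ it contributes $\langle c_t, -\Ric(g_t)\Omega^{1-2/n}\rangle_{c_t}$ times that scalar, and $\langle c_t, -\Ric(g_t)\Omega^{1-2/n}\rangle_{c_t}$ is, up to the density power, the $g_t$-trace of $-\Ric(g_t)$, i.e.\ $-R(g_t)$ times an appropriate power of $\Omega$. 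Tracking the scalar factor $\frac{d}{dt}\det(c_0+tv)^{-1/n}$ and using $\tr_{c_t} v / n$ in place of it (which is legitimate since $\frac{d}{dt}\log\det(c_0+tv)^{1/n} = \tfrac1n \langle (c_0+tv)^{-1}, v\rangle$ and $(c_0+tv)^{-1}$ is proportional to $c_t^{-1}$), one rewrites the $c_t$-component of $-\Ric(g_t)$ as $-\tfrac1n R(g_t) g_t$ in the density-weighted pairing. Reassembling, the second term converts $-\Ric(g_t)$ into $-\Ric(g_t)+\tfrac1n R(g_t)g_t$ and leaves $v\det(c_0+tv)^{-1/n}$ as the first slot of the pairing, which is exactly the integrand claimed. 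Integrating in $t$ completes the argument.

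I expect the main obstacle to be purely bookkeeping: correctly tracking the density weights $|\Lambda^nM|^{-2/n}$ and the powers of $\Omega$ through the pairing $\langle\,\cdot\,,\cdot\,\rangle_{c_t}$, and verifying that the $c_t$-proportional part of $\dot c_t$ combines with $-\Ric(g_t)$ to produce precisely $-\Ric(g_t)+\tfrac1n R(g_t)g_t$ with the right scalar coefficient $\det(c_0+tv)^{-1/n}$. There is also a mild regularity point — that $t\mapsto Q_\Omega(c_t)$ is $\mathcal{C}^1$ so that the fundamental theorem of calculus applies — but this follows immediately from Lemma \ref{derivs} together with smoothness of $t\mapsto c_t$. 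No compactness or PDE input is needed here; the inequality direction comes entirely from the elementary facts $I(c_t)\leq Q_\Omega(c_t)$ and equality at the Yamabe metric $g_1$.
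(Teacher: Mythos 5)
Your proposal is correct and follows essentially the same route as the paper: exploit $I(c_1)=Q_\Omega(c_1)$ (since $g_1$ is Yamabe) together with $I(c_0)\leq Q_\Omega(c_0)$ to reduce to $Q_\Omega(c_1)-Q_\Omega(c_0)$, then compute $\dot c_t=\det(c_0+tv)^{-1/n}\left[v-\tfrac{1}{n}(\tr_{c_t}v)c_t\right]$ and apply Lemma \ref{derivs} plus the fundamental theorem of calculus, with the trace part converting $-\Ric(g_t)$ into $-\Ric(g_t)+\tfrac{1}{n}R(g_t)g_t$. No gaps; the bookkeeping you flag is exactly the computation the paper carries out.
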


 \begin{proof}[Proof]
Since $g$ is a Yamabe metric for $c_1$,
 \[
 Q_{|\dVol_g|}(c_1)=Q(g)=I(c_1),
 \]
 so
 \begin{eqnarray*}
 I(c_1)-I(c_0)&=&Q_{|\dVol_g|}(c_1)-\left(\inf_{\Omega\in\mathcal{V}}Q_\Omega(c_0)\right)\\
 &\geq&Q_{|\dVol_g|}(c_1)-Q_{|\dVol_g|}(c_0).
 \end{eqnarray*}

 We calculate
 \begin{eqnarray*}
 \frac{dc_{\tau}}{d\tau}\biggr\vert_{\tau=t}
&=&\frac{d}{d\tau}\biggr\vert_{\tau=t}\left[\frac{c_0+\tau v}{\det(c_0+\tau v)^{\frac{1}{n}}}\right]\\
&=&\frac{v\cdot\det(c_0+tv)^{\frac{1}{n}}-(c_0+tv)\cdot\tfrac{1}{n}\det(c_0+tv)^{\frac{1}{n}-1}\cdot \tr_{c_0+tv}v\det(c_0+tv)}
{\det(c_0+tv)^{\frac{2}{n}}}\\
&=&\det(c_0+tv)^{-\frac{1}{n}}\left[v-\tfrac{1}{n}(\tr_{c_t}v)c_t\right],
 \end{eqnarray*}
so by Lemma \ref{derivs},
\begin{eqnarray*}
&&\Vol(\Omega)^{1-\frac{2}{n}}\frac{d}{d\tau}\biggr\vert_{\tau=t}\left[Q_{|\dVol_g|}(c_\tau)\right]\\
&=&
\int_M
\langle v-\tfrac{1}{n}(\tr_{c_t}v)c_t, - \Ric(g_t)\det(c_0+tv)^{-\frac{1}{n}}\Omega^{1-2/n}\rangle_{c_t}\\
&=&
\int_M
\langle v, (- \Ric(g_t)+\tfrac{1}{n}R(g_t)g_t)\det(c_0+tv)^{-\frac{1}{n}}\Omega^{1-2/n}\rangle_{c_t}.
\end{eqnarray*}
The result follows by the Fundamental Theorem of Calculus.
 \end{proof}

\section{An ``Euler-Lagrange inequality''} \label{sect-euler-lagrange}

\renewcommand{\labelenumi}{(\roman{enumi})}

Let $M$ be a compact connected smooth manifold, and suppose the conformal class $c$ on $M$ attains the Yamabe invariant of the manifold $M$.

\begin{proposition} \label{euler-lagrange}
For each distributional section $v$ of $\Sym^2(T^*M)\otimes |\Lambda^nM|^{-2/n}$, there exists a Yamabe metric $g$ for $c$, such that
\[
\int_M \left\langle v, \left(\Ric(g)-\tfrac{1}{n}R(g)g\right) |\dVol_g|^{1-2/n} \right\rangle_c \geq 0.
\]
\end{proposition}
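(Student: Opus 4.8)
The plan is to exploit the variational characterization of $c$ as a maximizer of $I$ together with the modulus-of-continuity estimate of Proposition~\ref{modulus-lower}. Fix a background representative, and given a distributional section $v$, I would first approximate $v$ by smooth sections $v_\epsilon$ (this is harmless: the pairing in the conclusion is linear and continuous in $v$, and at the end a limiting argument recovers the distributional case). For a smooth $v$ that is small enough, form the one-parameter family $c_t = (c_0 + t v)/\det(c_0+tv)^{1/n}$ as in Proposition~\ref{modulus-lower}, with $c_0 = c$. Since $c$ attains $Y(M) = \sup_c I(c)$, we have $I(c_1) - I(c_0) \leq 0$. Feeding this into the inequality of Proposition~\ref{modulus-lower} gives
\[
0 \geq \int_0^1 \int_M \left\langle v, \left(-\Ric(g_t) + \tfrac{1}{n}R(g_t)g_t\right)\det(c_0+tv)^{-1/n}\Omega^{1-2/n}\right\rangle_{c_t}\,dt,
\]
where $\Omega$ is chosen so that $g_1 = \Omega^{2/n}c_1$ is Yamabe for $c_1$, and $g_t = \Omega^{2/n}c_t$.

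The next step is to let $t \to 0$, or rather to rescale $v$: replace $v$ by $sv$ for small $s>0$, divide the resulting inequality by $s$, and take $s \to 0$. The point is that as $s \to 0$ the family $c_t^{(s)}$ collapses toward $c$, the conformal class $c_1^{(s)}$ converges to $c$ in a strong topology, and — here is where Proposition~\ref{compactness} enters — the Yamabe metrics $g_1^{(s)}$ for $c_1^{(s)}$ subconverge (after passing to a subsequence, and after normalizing the volume) in $\mathcal{C}^{m-1}$ to a Yamabe metric $g$ for $c$. All the geometric quantities $\Ric(g_t)$, $R(g_t)$, $\det(c_0+tv)^{-1/n}$ appearing in the integrand converge uniformly along this subsequence to the corresponding quantities for $g$ (with $\det \to 1$ and $c_t \to c$), so the double integral, after dividing by $s$, converges to
\[
\int_M \left\langle v, \left(-\Ric(g) + \tfrac{1}{n}R(g)g\right)|\dVol_g|^{1-2/n}\right\rangle_c.
\]
The inequality $\leq 0$ survives the limit, giving $\int_M \langle v, (\Ric(g) - \tfrac{1}{n}R(g)g)|\dVol_g|^{1-2/n}\rangle_c \geq 0$, which is exactly the claim for smooth $v$.

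The final step is bookkeeping: pass from smooth $v$ to distributional $v$. Here one writes a general distributional section as a limit of smooth sections in a suitable weak topology; each smooth approximant yields a Yamabe metric, one extracts a convergent subsequence of these metrics via Proposition~\ref{compactness} (using that $\mathcal{F}$, the set of unit-volume Yamabe metrics, is compact), and the pairing passes to the limit because the tensor $(\Ric(g) - \tfrac{1}{n}R(g)g)|\dVol_g|^{1-2/n}$ depends continuously on $g \in \mathcal{F}$ in a topology against which the distributional $v$ pairs continuously.

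I expect the main obstacle to be the interchange of limits in the middle step: justifying that dividing the double integral by $s$ and sending $s\to 0$ genuinely produces the stated integral, uniformly along the subsequence supplied by Proposition~\ref{compactness}. One must control $g_t^{(s)}$ not just at $t=1$ but for all $t \in [0,1]$ simultaneously — i.e., that the whole segment of metrics stays in a compact family — and check that the $t$-integral and the $s\to 0$ limit commute (dominated convergence, using uniform $\mathcal{C}^{m-1}$ bounds from Theorem~\ref{higher-order}). A subtlety is that the Yamabe metric $g$ one obtains genuinely depends on $v$; there is no single metric working for all $v$ at this stage, which is precisely why the conclusion is an ``Euler--Lagrange inequality'' rather than an equality, and why the Hahn--Banach argument of Section~\ref{sect-hahn-banach} is needed afterward to assemble Theorem~\ref{conv-hull}.
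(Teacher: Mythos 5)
Your argument is essentially the paper's: both rest on the maximality of $c$, the inequality of Proposition~\ref{modulus-lower}, and the compactness of Proposition~\ref{compactness}, and differ only in bookkeeping. The paper performs the smoothing of $v$ and the shrinking of the deformation parameter simultaneously, via a single diagonal sequence $c_k=(c+t_kv_k)/\det(c+t_kv_k)^{1/n}$ with $t_k\|v_k\|_{\mathcal{C}^m}\to 0$; you do them in two separate stages. Your second stage is fine, but note that its final limit interchange (pairing a distributionally convergent sequence $v_{\epsilon_j}$ against a $\mathcal{C}^\infty$-convergent sequence $\mathcal{Q}(g_{\epsilon_j})$) needs the equicontinuity of the family $\{v_{\epsilon_j}\}$, which holds by Banach--Steinhaus because the space of smooth sections is Fr\'echet, hence barrelled; the paper's single-sequence version needs the same fact, so this is not a point of divergence. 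Your parenthetical that smoothing $v$ is ``harmless because the pairing is continuous in $v$'' is misleading as stated, since the metric $g$ produced depends on $v$; the correct justification is the subsequence extraction you give later.

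There is one genuine gap: you apply Proposition~\ref{compactness} without checking its hypothesis $I(c)<\Lambda$. When $(M,c)$ is the conformally round sphere this hypothesis fails (and the family of unit-volume Yamabe metrics is genuinely noncompact there, owing to the noncompact conformal group), so your extraction of convergent subsequences of Yamabe metrics breaks down. The paper disposes of this case separately at the outset: for the round sphere $c$ contains the Einstein round metric $g$, for which $\Ric(g)-\tfrac{1}{n}R(g)g=0$ and the asserted inequality is trivial; by the resolution of the Yamabe problem every other maximizing class satisfies $I(c)<\Lambda$, so the compactness argument applies. You should add this case distinction.
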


\begin{proof}
If $(M,c)$ is the conformally round sphere, then $c$ contains an Einstein metric $g$, the round metric; for this $g$, the tensor $\Ric(g)-\tfrac{1}{n}R(g)g$ vanishes.  The result follows.

If $(M,c)$ is not the conformally round sphere, by the solution of the Yamabe problem \cite{aub76b,schoen84}, $I(c)<\Lambda$, so the compactness result Proposition \ref{compactness} applies.  We now give a proof in these cases using it.

Let $(v_k)$ be a sequence of smooth $c$-tracefree sections of $\Sym^2(T^*M)\otimes |\Lambda^nM|^{-2/n}$ which converge distributionally to $v$, and let $(t_k)$ be a sequence of positive reals such that for all $m$ we have $t_k||v_k||_{\mathcal{C}^{m},g}\to 0$.  (For instance, one might choose $t_k:= \tfrac{1}{k}||v_k||_{\mathcal{C}^{m},g}^{-1}$.)
Thus the sequence
\[
c_k:=\frac{c+t_kv_k}{\det(c+t_kv_k)^{\frac{1}{n}}},
\]
of smooth conformal classes $\mathcal{C}^\infty$-converges to $c$.  Let $(\Omega_k)$ be densities such that the metrics $\Omega_k^{2/n}c_k$ are volume-1 Yamabe metrics for the classes $(c_k)$.

Since $c$ attains the Yamabe invariant, we have that for each $k$,
\[
0\geq I(c_k)-I(c).
\]
Thus, applying Proposition \ref{modulus-lower} to $c$ and $c_k$,
\begin{eqnarray*}
0&\geq& \frac{I(c_k)-I(c)}{t_k}\\
&\geq&\int_0^1\int_M\left\langle v_k,
\left(- \Ric(g_{k,\tau}) +\tfrac{1}{n}R(g_{k,\tau})g_{k,\tau}\right)\det(c_0+\tau t_k v)^{-1/n}
\Omega_k^{1-2/n}\right\rangle_{c_{k,\tau}}d\tau,
\end{eqnarray*}
where we define the 1-parameter families of conformal classes $c_{k, \tau}$ by
\[
c_{k, \tau}:=\frac{c+\tau t_kv_k}{\det(c+\tau t_kv_k)^{\frac{1}{n}}}
\]
and metrics $g_{k, \tau}$ by $\Omega_k^{2/n}c_{k, \tau}$.

By the compactness result Proposition \ref{compactness} and a diagonal argument, we may choose a subsequence $({k_i})$ such that $g_{k_i}$ converges in $\mathcal{C}^\infty$ to a volume-1 Yamabe metric, $g$ say, for $c$, with $\Omega_{k_i}$ converging in $\mathcal{C}^\infty$ to $|\dVol_g|$.  Therefore also the sequence of fields on $[0,1]\times M$,
\[
(x, \tau)\mapsto
\left\langle \cdot,
\left(- \Ric(g_{k_i,\tau}) +\tfrac{1}{n}R(g_{k_i,\tau})g_{k_i,\tau}\right)\det(c_0+\tau t_{k_i} v)^{-1/n}
\Omega_{k_i}^{1-2/n}\right\rangle_{c_{k_i,\tau}},
\]
$\mathcal{C}^\infty$-converges to the constant-in-$\tau$ field
\[
(x, \tau)\mapsto
\left\langle\cdot,\left(- \Ric(g) +\tfrac{1}{n}R(g)g\right)|\dVol_g|^{1-2/n}\right\rangle_c.
\]
By construction, the sequence $(v_k)$ of constant-in-$\tau$ fields on $[0,1]\times M$ converges distributionally to the constant-in-$\tau$ field $v$.  Pairing, it follows that
\begin{eqnarray*}
&&\int_M \left\langle v, \left(- \Ric(g) +\tfrac{1}{n}R(g)g\right)|\dVol_g|^{1-2/n} \right\rangle_c \\
&=&\lim_{i\to\infty}
\int_0^1\int_M\left\langle v_{k_i},
\left(- \Ric(g_{k_i,\tau}) +\tfrac{1}{n}R(g_{k_i,\tau})g_{k_i,\tau}\right)\det(c_0+\tau t_{k_i} v)^{-1/n}
\Omega_{k_i}^{1-2/n}\right\rangle_{c_{k_i,\tau}}d\tau\\
&\leq& 0.
\end{eqnarray*}
\end{proof}

\section{Hahn-Banach argument} \label{sect-hahn-banach}

\subsection{Preliminaries from functional analysis} \label{functional}

\begin{theorem}[Hahn-Banach separation theorem {\cite[Theorem 3.4]{rudin}}, {\cite[Corollary 2.2.3]{edwards}}] \label{hahn-banach-thm}
Let $X$ be a locally convex topological vector space over $\mathbb{R}$, and $A$ and $B$ nonempty closed convex subsets of $X$, with $B$ compact.  If for all functionals $\varphi\in X^*$ we have
\[
\sup_{a\in A}\varphi(a) \geq \inf_{b\in B}\varphi(b),
\]
then $A\cap B\neq\emptyset$.
\end{theorem}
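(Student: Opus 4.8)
The plan is to argue by contraposition: assuming $A\cap B=\emptyset$, I will produce a functional $\varphi\in X^*$ with $\sup_{a\in A}\varphi(a)<\inf_{b\in B}\varphi(b)$, which contradicts the hypothesis. Thus the whole content of the statement is the \emph{strict} separation of a closed convex set from a disjoint compact convex set in a locally convex space, and I would base everything on two classical ingredients: the analytic Hahn--Banach extension theorem (a linear functional on a subspace that is there dominated by a sublinear $p\colon X\to\mathbb{R}$ extends to all of $X$ with $\varphi\leq p$), and the Minkowski gauge of a convex open neighbourhood of the origin, which supplies exactly such a sublinear $p$.

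The geometric heart is the difference set $A-B$. It is convex and, by disjointness, omits $0$; and it is \emph{closed}, since $-B$ is compact, $A$ is closed, and a compact-plus-closed sum is closed in a topological group. This is the single place where compactness of $B$ is indispensable---without it the conclusion fails, as two disjoint closed convex sets in the plane may fail to be strictly separated. Because $X$ is locally convex I can then choose a symmetric convex open neighbourhood $V$ of $0$ with $(A-B)\cap(V+V)=\emptyset$; a one-line computation shows the thickened sets $A+V$ and $B+V$ are disjoint and convex, with $A+V$ open.

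It remains to separate the open convex set $A+V$ from the convex set $B+V$ and to harvest a strict gap. Fixing basepoints $a_0\in A+V$, $b_0\in B+V$, the set $U:=(A+V)-(B+V)+(b_0-a_0)$ is an open convex neighbourhood of $0$ that misses $x_0:=b_0-a_0$; its gauge $p_U$ is sublinear with $U=\{p_U<1\}$, so $p_U(x_0)\geq1$. Defining $f(tx_0):=t$ on $\mathbb{R}x_0$, one has $f\leq p_U$ there, and the analytic theorem extends $f$ to $\varphi\leq p_U$ on $X$ with $\varphi(x_0)=1$; domination forces $\varphi$ to be bounded near $0$, hence continuous, and unwinding $U$ gives $\varphi(a')<\varphi(b')$ for all $a'\in A+V$, $b'\in B+V$. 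Writing $\gamma:=\inf_{B+V}\varphi$, openness of $A+V$ gives $\sup_A\varphi\leq\gamma$, while the thickening of $B$ by $V$ (so that $\varphi(V)$ contains an interval $(-\delta,\delta)$) pushes $\inf_B\varphi\geq\gamma+\delta$, yielding the required strict inequality. I expect the main obstacle to lie not in this geometric packaging but in the analytic extension step itself: this is the point where the axiom of choice enters, through a Zorn's-lemma maximality argument over dominated partial extensions, with the delicate one-dimensional extension inequality at its core.
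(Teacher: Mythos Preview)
The paper does not prove this theorem; it is quoted as a classical result with citations to Rudin and Edwards, and is used as a black box in Section~\ref{sect-hahn-banach}. Your argument is correct and is essentially the standard proof one finds in those references: pass to the difference set $A-B$ (closed because $B$ is compact and $A$ is closed---this is exactly where compactness is used), use local convexity to thicken by a symmetric convex neighbourhood $V$ of $0$, then run the Minkowski-gauge/analytic Hahn--Banach construction to produce a continuous functional separating $A+V$ from $B+V$, and finally exploit the nontriviality of $\varphi$ on $V$ to upgrade the weak separation to a strict gap between $\sup_A\varphi$ and $\inf_B\varphi$.

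One small point worth tightening if you write this out in full: in your last sentence the strict-gap step is slightly telegraphic. The clean version is to pick $v_0\in V$ with $\varphi(v_0)=\epsilon>0$ (possible since $\varphi\neq 0$ and $V$ is open), and then observe that for $a\in A$ and $b\in B$ one has $a+v_0\in A+V$ and $b-v_0\in B+V$, giving $\varphi(a)\leq\gamma-\epsilon$ and $\varphi(b)\geq\gamma+\epsilon$. This makes the role of the symmetric thickening explicit.
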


\begin{theorem}[{\cite[Proposition 1.2]{phelps}},  {\cite[Corollary 8.13.3]{edwards}}] \label{choquet}
Let $Y$ be a Hausdorff locally convex topological vector space over $\mathbb{R}$, and $K\subseteq Y$ a nonempty compact subset.  If a point $y_0\in Y$ is in the closed convex hull of $K$, then there exists a Borel probability measure $\mu$ on $K$, such that
\[
y_0 = \int_Ky \ d\mu(y).
\]
\end{theorem}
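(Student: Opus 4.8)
The plan is to realize $y_0$ as the barycenter of a weak-$*$ limit of finitely supported probability measures, never invoking the existence of barycenters for arbitrary measures (which can fail when $Y$ is not complete). Since $K$ is compact and $Y$ is Hausdorff, $K$ is compact Hausdorff in its own right, so by the Riesz representation theorem the set $P(K)$ of Radon probability measures on $K$ sits inside $C(K)^*$ as $\{\Lambda : \Lambda(f)\ge 0 \text{ whenever } f\ge 0,\ \Lambda(1)=1\}$. Both defining conditions are weak-$*$ closed and force $\|\Lambda\|\le 1$, so $P(K)$ is a weak-$*$ closed subset of the unit ball of $C(K)^*$, hence weak-$*$ compact by Banach--Alaoglu; it is plainly convex.

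Next I record the finite-dimensional base case in the form needed: for $y_1,\dots,y_m\in K$ and $\lambda_i\ge 0$ with $\sum_i\lambda_i=1$, the point $\sum_i\lambda_i y_i$ is, by the definition of integration against a finitely supported measure, the barycenter of $\sum_i\lambda_i\delta_{y_i}\in P(K)$, i.e. $\varphi\bigl(\sum_i\lambda_i y_i\bigr)=\int_K\varphi\,d\bigl(\sum_i\lambda_i\delta_{y_i}\bigr)$ for every $\varphi\in Y^*$. Thus every point of $\operatorname{conv}(K)$ is the barycenter of some finitely supported element of $P(K)$.

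Now let $y_0\in\overline{\operatorname{conv}}(K)$. Since $\overline{\operatorname{conv}}(K)$ is the closure of $\operatorname{conv}(K)$, choose a net $(z_\alpha)$ in $\operatorname{conv}(K)$ with $z_\alpha\to y_0$, and write $z_\alpha=\int_K y\,d\nu_\alpha(y)$ with $\nu_\alpha\in P(K)$ finitely supported. By weak-$*$ compactness of $P(K)$, pass to a subnet along which $\nu_\alpha\to\mu$ for some $\mu\in P(K)$. For each $\varphi\in Y^*$ the restriction $\varphi|_K$ lies in $C(K)$, so $\varphi(z_\alpha)=\int_K\varphi\,d\nu_\alpha\to\int_K\varphi\,d\mu$; on the other hand $\varphi$ is continuous on $Y$, so $\varphi(z_\alpha)\to\varphi(y_0)$. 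Comparing, $\varphi(y_0)=\int_K\varphi\,d\mu$ for every $\varphi\in Y^*$, which is precisely the assertion $y_0=\int_K y\,d\mu(y)$; this $\mu$ is the desired Borel probability measure.

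The delicate point --- and the reason for routing the argument through the approximating net rather than through a continuous barycenter map $P(K)\to Y$ --- is that without completeness or quasi-completeness of $Y$, a general $\mu\in P(K)$ need not possess a barycenter in $Y$ at all. We sidestep this by invoking the existence of a barycenter only for the finitely supported $\nu_\alpha$ (where it is trivial) and for the limit $\mu$ (where it is forced, since the hypothesis hands us the candidate $y_0$). Everything else is routine: the interchange of limit and integral is just continuity and boundedness of $\varphi|_K$ on the compact set $K$, and working with nets rather than sequences throughout removes any need to assume $Y$ or $P(K)$ metrizable.
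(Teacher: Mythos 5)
Your proof is correct and follows essentially the same route as the paper's: approximate $y_0$ by barycenters of finitely supported probability measures drawn from $\operatorname{conv}(K)$, extract a weak-$*$ convergent subnet in $P(K)$ via Banach--Alaoglu, and identify the limit measure's barycenter with $y_0$ by testing against each $\varphi\in Y^*$. Your use of nets (rather than the paper's sequences) and your explicit verification that $P(K)$ is weak-$*$ closed are in fact the more careful choices for a general, possibly non-metrizable, locally convex space, though in the paper's application the ambient space is Fr\'echet and sequences suffice.
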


\begin{remarks}
\begin{enumerate}
\item
The integral is to be taken as a \emph{Pettis integral}
(\cite[Section 11.10]{ab}, {\cite[Section 8.14]{edwards}}). That is, for all functionals $\psi\in Y^*$,
$\psi(y_0) = \int_K\psi(y) \ d\mu(y)$.
\item By a straightforward argument, the converse is also true.
\end{enumerate}
\end{remarks}

\begin{proof}
Since the closed  convex hull of $K$ contains $y_0$, there exist sequences $N_k$ of natural numbers,  $(a^k_\gamma)_{1\leq \gamma\leq N_k}$ of finite sets of positive reals, and $(y^k_\gamma)_{1\leq \gamma\leq N_k}$ of finite sets of elements of $K$, such that $1=\sum_{\gamma=1}^{N_k}a^k_\gamma$ for all $k$ and $y_0=\lim_{k\to\infty}\sum_{\gamma=1}^{N_k} a^k_\gamma y^k_\gamma$.
Define a sequence of discretely-supported probability measures $(\mu_k)$ on $K$ by
$\mu_k:=\sum_{\gamma=1}^{N_k}a^k_\gamma\delta_{y^k_\gamma}$.  Thus for all functionals $\psi\in Y^*$,
$\psi(y_0)=\lim_{k\to\infty}\int_{K} \psi(y) d\mu_k(y)$.

Applying the Banach-Alaoglu theorem to to the set of Radon measures on $K$, there exists a subsequence $(\mu_{k_i})$ which weak-*-converges to some Borel probability measure $\mu$ on $K$.  For all functionals $\psi\in Y^*$, $\psi(y_0)=\lim_{i\to\infty}\int_{K} \psi(y) d\mu_{k_i}(y)=\int_K\psi(y) \ d\mu(y)$.
\end{proof}

\subsection{Proof of Theorem \ref{conv-hull}}

Let $M$ be a compact smooth $n$-manifold, and $c$ a conformal equivalence class on $M$.

\begin{lemma} \label{pairing}
The conformal equivalence class $c$ induces canonical pairings
\[
\langle \cdot, \cdot \rangle: \Gamma(\Sym^2(T^*M)\otimes |\Lambda^nM|^{s}) \times \mathcal{C}^\infty(\Sym^2(T^*M)\otimes |\Lambda^nM|^{1-4/n-s}) \to \mathbb{R},
\]
($\Gamma$ denoting distributional sections) such that the induced maps
\[
\Gamma(\Sym^2(T^*M)\otimes |\Lambda^nM|^{s}) \to (\mathcal{C}^\infty(\Sym^2(T^*M)\otimes |\Lambda^nM|^{1-4/n-s}))^*
\]
are isomorphisms of topological vector spaces.
\end{lemma}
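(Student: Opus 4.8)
The plan is to reduce the statement to a purely algebraic fact about vector bundles and then invoke the standard duality between smooth sections and distributions on a compact manifold. Recall that for a vector bundle $E\to M$ over a compact manifold one \emph{defines} $\Gamma(E)$, the space of distributional sections, to be the topological dual of $\mathcal{C}^\infty(M,E^*\otimes|\Lambda^nM|)$. Applying this with $E=\Sym^2(T^*M)\otimes|\Lambda^nM|^{s}$, and using the canonical identifications $(\Sym^2(T^*M))^*=\Sym^2(TM)$ and $(|\Lambda^nM|^{s})^*=|\Lambda^nM|^{-s}$, one gets
\[
\Gamma(\Sym^2(T^*M)\otimes|\Lambda^nM|^{s})=\bigl(\mathcal{C}^\infty(M,\Sym^2(TM)\otimes|\Lambda^nM|^{1-s})\bigr)^*.
\]
So it suffices to produce a canonical isomorphism of vector bundles
\[
\Phi_c\colon \Sym^2(T^*M)\otimes|\Lambda^nM|^{1-4/n-s}\xrightarrow{\ \sim\ }\Sym^2(TM)\otimes|\Lambda^nM|^{1-s},
\]
depending only on $c$; dualizing the induced map on smooth sections then yields the asserted isomorphism, and it will be transparent from the construction that the pairing is the one obtained by contracting all four covariant indices against $c$ and integrating the resulting density.

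Next I would construct $\Phi_c$ by ``raising indices with $c$''. As recalled in the Notation subsection, $c$ is a positive-definite section of $\Sym^2(T^*M)\otimes|\Lambda^nM|^{-2/n}$; pointwise it is therefore an isomorphism $TM\to T^*M\otimes|\Lambda^nM|^{-2/n}$, whose inverse is a section $c^{-1}$ of $\Sym^2(TM)\otimes|\Lambda^nM|^{2/n}$. Contracting a symmetric $2$-covariant tensor against $c^{-1}$ in each of its two slots converts $\Sym^2(T^*M)$ to $\Sym^2(TM)$ while raising the density weight by $4/n$; applied to the bundle $\Sym^2(T^*M)\otimes|\Lambda^nM|^{1-4/n-s}$ this lands in $\Sym^2(TM)\otimes|\Lambda^nM|^{1-s}$, and this is $\Phi_c$. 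Since $c$ is positive-definite, $c^{-1}$ is non-degenerate, so $\Phi_c$ is fiberwise invertible, hence a bundle isomorphism; and it manifestly depends only on $c$, not on any choice of representative metric.

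Finally I would conclude by functoriality. A vector bundle isomorphism over a compact manifold induces a topological isomorphism between the corresponding Fréchet spaces of smooth sections (the $\mathcal{C}^\infty$ topology is given by the seminorms measuring all derivatives in finitely many local trivializations, and these are comparable under a smooth bundle map with smooth inverse). Hence $\Phi_c$ induces a topological isomorphism $\mathcal{C}^\infty(\Sym^2(T^*M)\otimes|\Lambda^nM|^{1-4/n-s})\cong\mathcal{C}^\infty(\Sym^2(TM)\otimes|\Lambda^nM|^{1-s})$, whose transpose is a topological isomorphism of the duals; composing with the identification of the first step gives the map in the statement. Unwinding, for a \emph{smooth} section $v$ the pairing becomes $\langle v,w\rangle=\int_M\langle v,w\rangle_c$, where $\langle v,w\rangle_c$ is the density obtained by contracting the four covariant indices of $v\otimes w$ against two copies of $c^{-1}$ (the weights $s$ and $1-4/n-s$ together with the two factors of weight $2/n$ sum to $1$) — consistent with the notation $\langle\cdot,\cdot\rangle_c$ used throughout the paper.

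There is no serious obstacle here; the argument is essentially bookkeeping with density weights, and the exponent $1-4/n-s$ is forced precisely so that the fourfold contraction against $c^{-1}$ produces a genuine density. The only non-formal ingredient is the standard fact, used in the last step, that an isomorphism of vector bundles induces a homeomorphism of the spaces of smooth sections (and, correspondingly, of distributional sections, for either the strong or the weak-$*$ dual topology); and the only point demanding care is to phrase everything in terms of the intrinsic, metric-independent pairing coming from the density-twisted dual bundle, so that no auxiliary choice of volume form enters.
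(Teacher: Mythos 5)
Your proposal is correct and takes essentially the same approach as the paper, which simply picks a representative $g\in c$, defines $\langle v,w\rangle=\int_M g(v,w)\,|\dVol_g|^{4/n}$, and notes that the weight bookkeeping makes this independent of the choice of $g$; your index-raising with $c^{-1}$ produces exactly this pairing, just phrased without choosing a representative. Your reduction to the standard duality $\Gamma(E)\cong(\mathcal{C}^\infty(E^*\otimes|\Lambda^nM|))^*$ and functoriality under bundle isomorphisms is a correct and welcome expansion of the topological-vector-space isomorphism claim, which the paper's one-line proof leaves implicit.
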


\begin{proof}
Pick a metric $g\in c$.  The induced pairing $\langle v, w\rangle = \int g(v, w) \ |\dVol_g|^{4/n}$
is easily checked to be independent of the choice of $g\in c$.
\end{proof}

Suppose henceforth that the conformal class $c$ attains $Y(M)$.  Denote by $\mathcal{F}\subseteq \mathcal{C}^\infty(\Sym^2(T^*M))$
the set of volume-1 Yamabe metrics in the conformal class $c$.

Define a function
\[
\mathcal{Q}:c\to \mathcal{C}^\infty(\Sym^2(T^*M)\otimes |\Lambda^nM|^{1-2/n})
\]
on the set of metrics in the conformal class $c$, by
\[
\mathcal{Q}(g):=\left(\Ric(g)-\frac{1}{n}R(g)g\right)|\dVol_g|^{1-2/n}.
\]

\begin{proposition} \label{conv-hull-exp}
The closed convex hull of the set $\mathcal{Q}(\mathcal{F})$ contains 0.
\end{proposition}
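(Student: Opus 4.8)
The plan is to deduce Proposition \ref{conv-hull-exp} from Proposition \ref{euler-lagrange} by a Hahn-Banach separation argument, exactly in the spirit of the functional-analytic setup just established. First I would fix the ambient topological vector space: take $Y := \mathcal{C}^\infty(\Sym^2(T^*M)\otimes |\Lambda^nM|^{1-2/n})$ with its usual Fr\'echet topology, so that $Y$ is a Hausdorff locally convex TVS and, by Lemma \ref{pairing} applied with $s=1-2/n$ (so that $1-4/n-s = -2/n$), its dual $Y^*$ is identified with the space of distributional sections $\Gamma(\Sym^2(T^*M)\otimes |\Lambda^nM|^{-2/n})$, the pairing being the canonical $c$-pairing $\langle\cdot,\cdot\rangle$ used throughout. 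Thus an arbitrary continuous functional $\varphi\in Y^*$ is precisely pairing against a distributional section $v$ of $\Sym^2(T^*M)\otimes|\Lambda^nM|^{-2/n}$.

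Next I would record that $\mathcal{Q}(\mathcal{F})$ is a compact subset of $Y$. This uses the compactness result: since $c$ attains $Y(M)$, either $(M,c)$ is conformally round (in which case $\mathcal{F}$ contains the round metric and $0\in\mathcal{Q}(\mathcal{F})$ trivially, so the proposition is immediate), or $I(c)<\Lambda$ and Proposition \ref{compactness} shows $\mathcal{F}$ is $\mathcal{C}^\infty$-compact; the map $\mathcal{Q}$ is continuous (it is a smooth algebraic/differential expression in $g$ together with $|\dVol_g|^{1-2/n}$), so $\mathcal{Q}(\mathcal{F})$ is compact in $Y$. Let $K := \mathcal{Q}(\mathcal{F})$. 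Now I want to show $0$ lies in the closed convex hull $\overline{\mathrm{co}}(K)$. Suppose not. Then I would apply the Hahn-Banach separation theorem (Theorem \ref{hahn-banach-thm}) with $A = \{0\}$ and $B = \overline{\mathrm{co}}(K)$ --- both nonempty closed convex, and $B$ compact since the closed convex hull of a compact set in a Fr\'echet space is compact. If $0\notin B$, the separation theorem fails, so there must exist a functional $\varphi\in Y^*$ with $\sup_{a\in A}\varphi(a) < \inf_{b\in B}\varphi(b)$, i.e. $0 < \inf_{b\in B}\varphi(b) \leq \inf_{g\in\mathcal{F}}\varphi(\mathcal{Q}(g))$. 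Writing $\varphi = \langle v, \cdot\rangle$ for the corresponding distributional section $v$, this reads
\[
\langle v, \mathcal{Q}(g)\rangle = \int_M \left\langle v, \left(\Ric(g)-\tfrac{1}{n}R(g)g\right)|\dVol_g|^{1-2/n}\right\rangle_c > 0 \quad\text{for every } g\in\mathcal{F}.
\]
But applying Proposition \ref{euler-lagrange} to the distributional section $-v$ produces a Yamabe metric $g\in\mathcal{F}$ (we may normalize to unit volume) with $\int_M\langle -v, \mathcal{Q}(g)\rangle_c \geq 0$, i.e. $\langle v, \mathcal{Q}(g)\rangle \leq 0$, contradicting the strict inequality. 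Hence $0\in\overline{\mathrm{co}}(K)$, which is the assertion.

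The routine-but-essential bookkeeping I would be careful about is matching conventions: that the canonical pairing of Lemma \ref{pairing} at $s=1-2/n$ really is the pairing $\int_M\langle v,\cdot\rangle_c$ appearing in Proposition \ref{euler-lagrange}, and that the Fr\'echet-space dual is exactly the distributional sections with no completion issue. The main obstacle, such as it is, is the topological subtlety that the closed convex hull of a compact set need not be compact in a general locally convex space --- one must invoke that $Y$ is a Fr\'echet space (indeed, that it is quasi-complete), where Krein-Smulian / the standard fact guarantees $\overline{\mathrm{co}}(K)$ is compact; this is what licenses taking $B = \overline{\mathrm{co}}(K)$ as the compact convex set in Theorem \ref{hahn-banach-thm}. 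Once that is in hand the argument is a clean contrapositive: Proposition \ref{euler-lagrange} says no functional strictly separates $0$ from $\mathcal{Q}(\mathcal{F})$, and Hahn-Banach then forces $0$ into the closed convex hull. (Theorem \ref{choquet} is not needed here; it will be used in the subsequent step to upgrade ``$0$ in the closed convex hull'' to ``$0 = \int_{\mathcal{F}}\mathcal{Q}(g)\,d\mu(g)$'' for a probability measure $\mu$.)
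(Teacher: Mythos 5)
Your argument is correct and follows the same overall strategy as the paper: combine the Euler--Lagrange inequality (Proposition \ref{euler-lagrange}) with the Hahn--Banach separation theorem in the Fr\'echet space $\mathcal{C}^\infty(\Sym^2(T^*M)\otimes|\Lambda^nM|^{1-2/n})$, using Lemma \ref{pairing} to identify functionals with distributional sections. The one substantive difference is how you assign the roles of $A$ and $B$ in Theorem \ref{hahn-banach-thm}: you take $B=\overline{\mathrm{co}}(\mathcal{Q}(\mathcal{F}))$, which obliges you to prove that this set is \emph{compact}, and you correctly invoke the fact that in a Fr\'echet (more generally, quasi-complete locally convex) space the closed convex hull of a compact set is compact. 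That fact is true and your proof goes through, but it is an extra input sitting exactly at the delicate point flagged in the paper's remark about convex hulls; the paper sidesteps it entirely by taking $A=\overline{\mathrm{co}}(\mathcal{Q}(\mathcal{F}))$ (which need only be closed and convex) and $B=\{0\}$ (trivially compact), so that Proposition \ref{euler-lagrange} applied directly to $v$ gives $\sup_{a\in A}\langle v,a\rangle\geq 0=\langle v,0\rangle$ with no contradiction argument and no compactness of the hull needed. Both routes are valid; the paper's is leaner, while yours front-loads the compactness of $\overline{\mathrm{co}}(\mathcal{Q}(\mathcal{F}))$, which is in any case implicitly needed later when Theorem \ref{choquet} is applied to the compact set $\mathcal{Q}(\mathcal{F})$.
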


\begin{proof}
Let  $A$ be the closed convex hull of the set $\mathcal{Q}(\mathcal{F})$.
 The topological vector space of sections
\[
\mathcal{C}^\infty(\Sym^2(T^*M)\otimes |\Lambda^nM|^{1-2/n})
\]
 is Fr\'echet and therefore locally convex.  Thus it suffices to verify the hypothesis of Theorem \ref{hahn-banach-thm}.  Indeed, let $v$ be in
\[
\Gamma(\Sym^2(T^*M)\otimes |\Lambda^nM|^{-2/n}) \cong (\mathcal{C}^\infty(\Sym^2(T^*M)\otimes |\Lambda^nM|^{1-2/n}))^*
\]
(the isomorphism is by Lemma \ref{pairing}).  By Proposition \ref{euler-lagrange}, there exists (rescaling if necessary) a metric $g\in \mathcal{F}$ such that $\langle v, \mathcal{Q}(g)\rangle\geq 0$.
Since $\mathcal{Q}(g)\in A$, it follows that indeed
$\sup_{a\in A}\langle v, a\rangle\geq 0 =\langle v, 0\rangle$.
\end{proof}

The set $\mathcal{F}$, the set of volume-1 Yamabe metrics in the conformal class $c$, is compact by Proposition \ref{compactness}.  So $\mathcal{Q}(\mathcal{F})$ is also compact.

\begin{remark}
An earlier version of this paper featured the statement that the  convex hull of $\mathcal{Q}(\mathcal{F})$  is closed -- and therefore that the convex hull of $\mathcal{Q}(\mathcal{F})$ itself contains 0, with a corresponding alteration in the paper's main theorem.

It was claimed that the closedness of the  convex hull of $\mathcal{Q}(\mathcal{F})$ follows from the compactness of $\mathcal{Q}(\mathcal{F})$. This argument is incorrect; see for example \cite[Example 5.34]{ab}.  We do not know whether the convex hull of $\mathcal{Q}(\mathcal{F})$ must be closed.
\end{remark}

\begin{proposition} There exists a Borel probability measure $\overline{\mu}$ on $\mathcal{Q}(\mathcal{F})$, such that
$0 = \int_{\mathcal{Q}(\mathcal{F})} a \ d\overline{\mu}(a)$.
\end{proposition}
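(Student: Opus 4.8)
The plan is to deduce this statement directly from Proposition \ref{conv-hull-exp} together with Theorem \ref{choquet}. Proposition \ref{conv-hull-exp} tells us that $0$ lies in the closed convex hull of $\mathcal{Q}(\mathcal{F})$, and the paragraph immediately preceding this statement records that $\mathcal{Q}(\mathcal{F})$ is compact (being the continuous image under $\mathcal{Q}$ of the set $\mathcal{F}$, which is compact by Proposition \ref{compactness}). These are precisely the hypotheses of Theorem \ref{choquet}, applied with $Y = \mathcal{C}^\infty(\Sym^2(T^*M)\otimes |\Lambda^nM|^{1-2/n})$, $K = \mathcal{Q}(\mathcal{F})$, and $y_0 = 0$.

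So the proof is essentially one line: first note that $Y$ is a Fréchet space, hence Hausdorff and locally convex, so that Theorem \ref{choquet} is applicable; then invoke Theorem \ref{choquet} to produce the desired Borel probability measure $\overline{\mu}$ on $\mathcal{Q}(\mathcal{F})$ with $0 = \int_{\mathcal{Q}(\mathcal{F})} a \ d\overline{\mu}(a)$ in the Pettis sense. The only point requiring a word of care is that Theorem \ref{choquet} as stated gives a measure on the compact set $K$ itself rather than on a convex hull, which is exactly what is wanted here; and that the integral is understood as a Pettis integral, i.e.\ $\psi(0) = \int_{\mathcal{Q}(\mathcal{F})} \psi(a) \ d\overline{\mu}(a)$ for every $\psi \in Y^*$, as flagged in the remarks following Theorem \ref{choquet}.

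There is no real obstacle: this statement is a bookkeeping step assembling the compactness input (Proposition \ref{compactness}, via the remark that $\mathcal{F}$ and hence $\mathcal{Q}(\mathcal{F})$ is compact), the convexity input (Proposition \ref{conv-hull-exp}), and the abstract functional-analytic tool (Theorem \ref{choquet}). If anything deserves emphasis it is why one cannot simply work with the convex hull rather than its closure — the preceding remark explains that the convex hull of a compact set in an infinite-dimensional space need not be closed — which is exactly why the Choquet-type representation theorem, rather than a naive finite-combination argument, is needed, and also why the final theorem is phrased with a measure on $\mathcal{F}$ rather than a finite sum (the latter being recovered only under the extra finiteness hypothesis of Corollary \ref{conv-hull-finite}).

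Concretely, I would write: \emph{The space $Y := \mathcal{C}^\infty(\Sym^2(T^*M)\otimes |\Lambda^nM|^{1-2/n})$ is Fréchet, hence Hausdorff and locally convex. The subset $K := \mathcal{Q}(\mathcal{F})$ is compact, as the image of the compact set $\mathcal{F}$ (Proposition \ref{compactness}) under the continuous map $\mathcal{Q}$. By Proposition \ref{conv-hull-exp}, the point $0 \in Y$ lies in the closed convex hull of $K$. Theorem \ref{choquet} therefore yields a Borel probability measure $\overline{\mu}$ on $K = \mathcal{Q}(\mathcal{F})$ with $0 = \int_{\mathcal{Q}(\mathcal{F})} a \ d\overline{\mu}(a)$, the integral in the sense of Pettis.} Then Theorem \ref{conv-hull} follows by pushing $\overline{\mu}$ forward, or pulling back, along $\mathcal{Q}: \mathcal{F} \to \mathcal{Q}(\mathcal{F})$ to obtain a measure $\mu$ on $\mathcal{F}$ itself.
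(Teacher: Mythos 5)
Your proposal is correct and is exactly the paper's argument: the paper's proof is the one-line "Combine Proposition \ref{conv-hull-exp} with Theorem \ref{choquet}," relying on the compactness of $\mathcal{Q}(\mathcal{F})$ recorded just beforehand. Your write-up merely makes the verification of the hypotheses (Fr\'echet hence Hausdorff locally convex, compactness of $K$, $0$ in the closed convex hull) explicit.
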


\begin{proof}
Combine Proposition \ref{conv-hull-exp} with Theorem \ref{choquet}.
\end{proof}

Theorem \ref{conv-hull} now follows by choosing an arbitrary Borel measure $\mu$ on $\mathcal{F}$ such that $\mathcal{Q}_*\mu=\overline{\mu}$.


\bibliographystyle{alpha}
{\small\bibliography{yamabe}{}}

\end{document}